\documentclass[twoside,12pt]{cmft}

\pagespan{1}{?}
\cmftinfo{XXYYYZZ}

\newtheorem{theorem}{Theorem}[section]

\newtheorem{corollary}[theorem]{Corollary}
\newtheorem{lemma}[theorem]{Lemma}

\newtheorem{proposition}[theorem]{Proposition}
\newtheorem{example}[theorem]{Example}

\numberwithin{equation}{section}

\newcommand{\R}{\mathbb{R}}
\newcommand{\N}{\mathbb{N}}

\newcommand{\C}{\mathbb{C}}
\newcommand{\D}{\mathbb{D}}

\newcommand{\supp}{\textup{supp}}


\begin{document}

\title[How to find a measure from its potential]
{How to find a measure from its potential}
\date{December 8, 2007}

\author[Pritsker]{Igor E. Pritsker}
\email{igor@math.okstate.edu}
\address{Department of Mathematics, Oklahoma State University,
Stillwater, OK 74078, U.S.A.}

\keywords{Potential, measure, boundary values, Cauchy singular
integral, integral equations}

\subjclass{Primary 31A25; Secondary 30E20, 31A10}

\thanks{Research was partially supported by the National Security
Agency under grant H98230-06-1-0055, and by the Alexander von
Humboldt Foundation.}

\dedicatory{Dedicated to Professor W. K. Hayman on his 80th
birthday}

\begin{abstract}

We consider the problem of finding a measure from the given values
of its logarithmic potential on the support. It is well known that a
solution to this problem is given by the generalized Laplacian. The
case of our main interest is when the support is contained in a
rectifiable curve, and the measure is absolutely continuous with
respect to the arclength on this curve. Then the generalized
Laplacian is expressed by a sum of normal derivatives of the
potential. Such representation was available for smooth curves, and
we show it holds for any rectifiable curve in the plane. We also
relax the assumptions imposed on the potential.

Finding a measure from its potential often leads to another closely
related problem of solving a singular integral equation with Cauchy
kernel. The theory of such equations is well developed for smooth
curves. We generalize this theory to the class of Ahlfors regular
curves and arcs, and characterize the bounded solutions on arcs.

\end{abstract}

\maketitle

\section{Logarithmic potentials in the complex plane}

Let $\mu$ be a positive Borel measure with the compact support
$\supp\,\mu$ in the complex plane $\C.$ Consider its logarithmic
potential
\begin{align} \label{1.1}
u(z):= \int \log|z-t|\,d\mu(t),
\end{align}
which is a subharmonic function in $\C,$ see, e.g., Hayman and
Kennedy \cite{HK} and Ransford \cite{Ran}. Furthermore, $u$ is
harmonic in $\C\setminus\supp\,\mu.$ Suppose that we know the values
of the potential function $u$ on a certain set. How can one recover
the measure from those values? A general answer to this question is
well known \cite{HK,Ran} in terms of the generalized
(distributional) Laplacian, and is a part of the Riesz
Representation Theorem for subharmonic functions (cf. Chapter 3 of
\cite{HK} and Section 3.7 of \cite{Ran}).

\noindent{\bf Theorem A.} {\em Suppose that $\mu$ is a positive
Borel measure with compact support $\supp\,\mu\subset\C$, whose
potential $u$ is defined by \eqref{1.1}. Then the measure is
represented by
\begin{align} \label{1.2}
d\mu = \frac{1}{2\pi}\Delta\,u,
\end{align}
where $\Delta$ is the generalized Laplacian.}

While this theorem gives a very general answer, it is not always
easy to apply in specific problems. The generalized Laplacian may,
in particular, take quite different forms. The most known and
natural case is when the generalized Laplacian reduces to the
regular one, under the additional $C^2$ smoothness assumptions on
the potential $u$ (see Theorem 1.3 in Saff and Totik \cite[p.
85]{ST}).

\noindent{\bf Theorem B.} {\em If $u$ has continuous second partial
derivatives in a domain $D\subset\C,$ then $\mu$ is absolutely
continuous with respect to the area Lebesgue measure $dxdy$ in $D$,
and
\begin{align} \label{1.3}
d\mu(x,y) = \frac{1}{2\pi}\Delta\,u(z)\,dxdy,\quad z=x+iy\in D,
\end{align}
where $\Delta$ is the regular Laplacian.}

We remark that the $C^2$ assumption on $u$ may be relaxed if one
follows a standard proof as given in \cite{ST}, but uses a more
general version of Green's theorem found in Shapiro \cite{Sha} or
Cohen \cite{Coh} (see also Bochner \cite{Boc}). In particular, it is
sufficient to assume in Theorem B that the second partial
derivatives of $u$ exist and are integrable in $D$ with respect to
$dxdy.$

Another well known example of the generalized Laplacian is given by
a sum of points masses. If $p(z)=\prod_{j=1}^n (z-a_j)$ is any
polynomial and $u(z)=\log |p(z)|$, then $\mu = \sum_{j=1}^n
\delta_{a_j}$, where $\delta_{a_j}$ is a unit point mass at
$a_j\in\C$ (cf. Theorem 3.7.8 in \cite[p. 76]{Ran}).

We are mostly interested in the case when $\mu$ is supported on a
rectifiable curve (or arc), and is absolutely continuous with
respect to the arclength measure on this curve. Then the form of the
result expressed through the normal derivatives of potential is also
classical, while it is certainly difficult to find the original
source. We follow the statement of Theorem  1.5 in \cite[p. 92]{ST}.

\noindent{\bf Theorem C.} {\em Suppose that the intersection of
$\supp\,\mu$ with a domain $D\subset\C$ is a simple $C^{1+\delta}$
arc, $\delta>0,$ with the left normal $n_+$ and the right normal
$n_-.$ If the potential $u$ is \textup{Lip\,1} in a neighborhood of
this arc then $\mu$ is absolutely continuous with respect to the
arclength $ds$ on this arc and
\begin{align} \label{1.4}
d\mu(s) = \frac{1}{2\pi}\left( \frac{\partial u}{\partial n_+}(s) +
\frac{\partial u}{\partial n_-}(s) \right)ds.
\end{align}}

Attempts to remove smoothness assumptions imposed on the curve go
back to Plemelj \cite{Ple1} and Radon \cite{Rad}, where the idea of
flux of a potential along a curve was introduced. This approach was
further developed by Burago, Maz'ya and Sapozhnikova \cite{BMS}, and
by Burago and Maz'ya \cite{BM}. We show that \eqref{1.4} remains
valid for an arbitrary rectifiable curve, and also relax assumptions
on the potential $u$. In order to state our assumptions, we need to
introduce the Smirnov spaces of analytic and harmonic functions.

Let $\Gamma\subset\C$ be a closed Jordan rectifiable curve. The
complement of $\Gamma$ in $\overline\C$ is the union of a bounded
Jordan domain $D_+$ and an unbounded Jordan domain $D_-.$ Consider a
conformal mapping $\psi:\D \to D_+$, where $\D:=\{w:|w|<1\},$ and
define the level curves $\Gamma_r:=\{z=\psi(w)\in D_+: |w|=r\},\
0\le r<1.$ A function $f$ analytic in $D_+$ is said to belong to the
Smirnov space $E^p(D_+),\ 1\le p<\infty,$ if
\begin{align} \label{1.5}
\sup_{0\le r<1} \int_{\Gamma_r} |f(z)|^p\,|dz| < \infty,
\end{align}
see Chapter 10 of Duren \cite{Dur}. These spaces are natural analogs
of the Hardy spaces $H^p$ on the unit disk $\D.$ Furthermore, $f\in
E^p(D_+)$ if and only if $f(\psi(w))(\psi'(w))^{1/p}\in H^p,$ cf.
\cite[p. 169]{Dur}. A function from $E^p(D_+)$ has nontangential
limit values almost everywhere on $\Gamma$ with respect to the
arclength measure by Theorem 10.3 of \cite[p. 170]{Dur}, and these
values are in $L^p(\Gamma,ds).$ We similarly introduce the harmonic
Smirnov space $e^p(D_+)$ that consists of harmonic functions in
$D_+$ satisfying \eqref{1.5}. Using a conformal mapping $\Psi:\D\to
D_-$, we repeat the same steps to define the corresponding spaces
$E^p(D_-)$ and $e^p(D_-)$ for the unbounded domain $D_-.$

Assume that $\supp\,\mu\subset\Gamma.$ Recall that the potential $u$
is harmonic in $\C\setminus\supp\, \mu$. Hence its partial
derivatives $u_x$ and $u_y$ are harmonic in $\C\setminus\supp\, \mu$
too. We say that $\nabla u \in e^p(D_{\pm})$ if both $u_x,u_y\in
e^p(D_+)$ and $u_x,u_y\in e^p(D_-)$ hold. Let $n_+$ be the inner
normal vector (pointing into $D_+$), and $n_-$ be the outer normal
vector (pointing into $D_-$) on $\Gamma.$ The normal direction is
well defined on $\Gamma$ for almost every point with respect to the
arclength measure. Hence we can define the directional derivatives
$\partial u/\partial n_+$ in $D_+$ and $\partial u/\partial n_-$ in
$D_-$. The corresponding boundary limit values for these normal
derivatives exist a.e. on $\Gamma$, as we show in the proof of the
theorem stated below. Thus \eqref{1.4} is understood in the sense of
such boundary values.

\begin{theorem} \label{thm1.1}
Let $\Gamma\subset\C$ be an arbitrary rectifiable Jordan curve.
Suppose that $\supp\,\mu\subset\Gamma$ and $u$ is the potential of
$\mu$ defined by \eqref{1.1}. If $\nabla u \in e^1(D_{\pm})$ then
$\mu$ is absolutely continuous with respect to the arclength $ds$ on
$\Gamma$ and \eqref{1.4} holds.
\end{theorem}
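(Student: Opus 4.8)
The plan is to identify the distributional Laplacian furnished by Theorem~A. Fix a test function $\phi\in C_c^\infty(\C)$. By Theorem~A,
\[
2\pi\int\phi\,d\mu=\int_{\C}u\,\Delta\phi\,dxdy=\int_{D_+}u\,\Delta\phi\,dxdy+\int_{D_-}u\,\Delta\phi\,dxdy,
\]
since $\Gamma$ is rectifiable and hence a null set for area. As $u$ is harmonic on each of $D_+$ and $D_-$, I would convert the two area integrals into contour integrals over $\Gamma$ by Green's second identity applied separately on each side. Because $\Delta u=0$ off $\Gamma$, the $\phi\,\Delta u$ terms drop out, and a careful bookkeeping of the two opposite normals ($n_-$ is the outward normal of $D_+$, while $n_+$ is the outward normal of $D_-$) should yield, after identifying the two one-sided boundary values of $u$,
\[
2\pi\int\phi\,d\mu=\int_\Gamma u\Big(\frac{\partial\phi}{\partial n_+}+\frac{\partial\phi}{\partial n_-}\Big)ds+\int_\Gamma\phi\Big(\frac{\partial u}{\partial n_+}+\frac{\partial u}{\partial n_-}\Big)ds .
\]
Once the first integral is shown to vanish, \eqref{1.4} will follow.

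Since $\Gamma$ is only rectifiable, Green's identity is not available on $D_\pm$ directly, and this is precisely where the hypothesis $\nabla u\in e^1(D_\pm)$ and the Smirnov space machinery must do the work. I would exhaust $D_+$ by the analytic level domains bounded by $\Gamma_r=\psi(\{|w|=r\})$, $r<1$, where the classical Green identity applies, and then let $r\to1$; the unbounded domain $D_-$ is handled the same way through $\Psi$, the contribution of a large circle being annihilated by the compact support of $\phi$. The area integrals pass to the limit by dominated convergence, since $u$ is locally integrable (being subharmonic) and $\Delta\phi$ is bounded with compact support. The essential point — and the main obstacle — is the passage to the limit in the \emph{boundary} integrals, that is, proving the generalized Green identity
\[
\int_{\Gamma_r}\Big(u\,\frac{\partial\phi}{\partial \nu_r}-\phi\,\frac{\partial u}{\partial \nu_r}\Big)\,ds_r\ \longrightarrow\ \int_{\Gamma}\Big(u\,\frac{\partial\phi}{\partial n_-}+\phi\,\frac{\partial u}{\partial n_+}\Big)\,ds
\]
for a merely rectifiable $\Gamma$, where $\nu_r$ denotes the outward normal of $\Gamma_r$.

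To control these contour integrals I would reformulate them complex-analytically. If $u$ is harmonic in $D_+$, then $f:=u_x-iu_y=2\,\partial u/\partial z$ is analytic there, and the condition $\nabla u\in e^1(D_+)$ is exactly the statement that $\mathrm{Re}\,f,\mathrm{Im}\,f\in e^1(D_+)$, i.e.\ $f\in E^1(D_+)$. Hence $f$ has nontangential boundary values in $L^1(\Gamma,ds)$, and, writing $\mathbf{n}_+$ for the complex inner unit normal (defined a.e.\ because $\Gamma$ is rectifiable), the normal derivative is recovered as $\partial u/\partial n_+=\mathrm{Re}\,(f\,\mathbf{n}_+)$; this already gives the a.e.\ existence of the boundary values of $\partial u/\partial n_\pm$ and their membership in $L^1(ds)$ asserted in the statement. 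The convergence of the integrals over $\Gamma_r$ to those over $\Gamma$ then reduces, via the substitution $w\mapsto f(\psi(w))\psi'(w)\in H^1(\D)$ (using that a rectifiable $\Gamma$ has $\psi'\in H^1$), to the classical $L^1$ mean convergence of $H^1$ functions on the circles $|w|=r$ to their boundary values. The same argument applies verbatim on $D_-$ through $\Psi$.

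Finally I would dispose of the first integral. The nontangential boundary values of $u$ taken from $D_+$ and from $D_-$ coincide a.e.\ on $\Gamma$ (both equal the trace of the potential), while $\phi$ is smooth, so $\partial\phi/\partial n_+=-\partial\phi/\partial n_-$ pointwise on $\Gamma$; therefore $\int_\Gamma u\,(\partial\phi/\partial n_++\partial\phi/\partial n_-)\,ds=0$. What survives is
\[
2\pi\int\phi\,d\mu=\int_\Gamma\phi\Big(\frac{\partial u}{\partial n_+}+\frac{\partial u}{\partial n_-}\Big)\,ds\qquad\text{for every }\phi\in C_c^\infty(\C).
\]
Since the right-hand side is the integration of $\phi$ against the $L^1(\Gamma,ds)$ density $\tfrac{1}{2\pi}(\partial u/\partial n_++\partial u/\partial n_-)$, the measure $\mu$ is absolutely continuous with respect to arclength and \eqref{1.4} holds. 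I expect the delicate part to be not the formal bookkeeping but the rigorous passage to the boundary in Green's identity on a rough curve, together with verifying that the one-sided traces of $u$ agree so that the $u\,\partial\phi/\partial n_\pm$ terms cancel.
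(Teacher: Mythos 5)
Your plan is correct in outline, but it is a genuinely different route from the paper's. The paper never invokes Theorem~A or Green's identity. Instead it forms the analytic completions $F_\pm=u+iv^\pm$ of $u$ in $D_\pm$, observes that $\nabla u\in e^1(D_\pm)$ is equivalent to $F_\pm'\in E^1(D_\pm)$, and computes a.e. on $\Gamma$ that $\partial u/\partial n_+ +\partial u/\partial n_- = \Im\left([F_-'-F_+']z'(s)\right)$, which (since $2\pi i\,C\mu=-F_\pm'$ in $D_\pm$) is $2\pi$ times the jump $[C\mu_+-C\mu_-]z'(s)$ of the Cauchy transform of the measure $\mu$ itself. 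Privalov's Fundamental Lemma in its version for measures then identifies this jump with the density of the absolutely continuous part of $\mu$; the absence of a singular part is established by a separate uniqueness step: the measure $\nu$ with density \eqref{1.4} is shown to have the same Cauchy transform as $\mu$ (Cauchy's formula for $E^1$ functions, Theorem 10.4 of \cite{Dur}), whence all moments $\int t^n\,d(\nu-\mu)$ and $\int (t-z_0)^{-n}\,d(\nu-\mu)$ vanish and $\nu=\mu$ by the density of this system in $C(\Gamma)$ \cite{Gai}. Your distributional scheme instead gets absolute continuity and \eqref{1.4} simultaneously from the weak identity against test functions, trading the Privalov lemma for measures and the moment argument against a limiting Green identity on the level curves $\Gamma_r$; your reduction of the $\int_{\Gamma_r}\phi\,\partial u/\partial\nu_r\,ds_r$ term to $L^1$ mean convergence of $f(\psi(w))\psi'(w)\in H^1$ is exactly the mechanism that makes the merely-rectifiable case work, and it is the same Smirnov-space machinery the paper uses for boundary values. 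Note that the author's remark that extending Theorem~C ``by employing a version of Green's theorem'' yields a less general result refers to applying a generalized Green theorem directly on the rough domain; your exhaustion by analytic level curves sidesteps that objection and does reach the full $e^1$ hypothesis.

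Two obligations in your plan are asserted but not discharged; both are provable, so they are completions rather than failures, but neither is automatic. First, the term $\int_{\Gamma_r}u\,\partial\phi/\partial\nu_r\,ds_r$ requires boundary behaviour of $u$ itself, not of $\nabla u$: here use that $F_+'\in E^1(D_+)$ means $(F_+\circ\psi)'\in H^1$, so $F_+\circ\psi$, hence $u\circ\psi$, extends continuously to $\overline{\D}$ (Theorem 3.11 of \cite{Dur}); combined with $\psi'\in H^1$ this gives the convergence of that term. On the $D_-$ side $F_-$ is multivalued and $u$ has logarithmic growth at infinity, so the same argument must be run in an annulus $r_0<|w|<1$ after subtracting the term $-\mu(\C)\log|w|$, which removes the period of the conjugate. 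Second, the a.e. equality of the one-sided traces $u_+=u_-$, which you use to cancel the $u\,(\partial\phi/\partial n_+ +\partial\phi/\partial n_-)$ term, needs proof: at a.e. $z\in\Gamma$ a tangent exists and $\psi$, $\Psi$ have nonzero angular derivatives (cf. \cite{Pom}), so each of $D_+$, $D_-$ fills an asymptotic half-disk at $z$; since $\Gamma$ is an area-null set, circles of a.e. small radius meet $\Gamma$ in zero angular measure, and the sub-mean value inequality gives $u(z)\le\frac{1}{2}\left(u_+(z)+u_-(z)\right)$, while upper semicontinuity of the potential gives $\max\left(u_+(z),u_-(z)\right)\le u(z)$; hence $u_+(z)=u_-(z)=u(z)$ a.e., as you claimed. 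With these two points supplied, your proof goes through and matches the generality of Theorem \ref{thm1.1}.
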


We present a ``complex function theory" proof of Theorem
\ref{thm1.1} in Section 3, which is based on the Cauchy transform of
$\mu.$ It is possible to extend Theorem C by following the
conventional proof of \cite{ST}, and by employing a version of
Green's theorem from \cite{Sha}. However, this gives a less general
result than the one in Theorem \ref{thm1.1}. Note that if $u$ is
Lipschitz continuous in an open neighborhood $G$ of $\supp\,\mu$,
then the partial derivatives of $u$ are bounded on
$G\setminus\Gamma$ by the Lipschitz constant. But they are also
bounded in a neighborhood of $\Gamma\setminus G$, being harmonic
outside $\supp\,\mu$. Hence the assumptions of Theorem C imply that
$u_x$ and $u_y$ are bounded in both domains $D_+$ and $D_-$ by the
maximum principle. It immediately follows that $\nabla u \in
e^1(D_{\pm})$, i.e. our assumption on $u$ is indeed weaker.

We also remark that Theorem \ref{thm1.1} has local nature, in fact.
If the intersection of $\supp\,\mu$ with a domain $G\subset\C$ is a
Jordan arc $\gamma$, then we write $\mu_1:=\mu\vert_{\gamma}$ and
$\mu_2:=\mu\vert_{\supp\,\mu\setminus\gamma}$. Hence
\[
u(z) = \int \log|z-t|\,d\mu_1(t) + \int \log|z-t|\,d\mu_2(t),
\]
and Theorem \ref{thm1.1} is applicable to the first potential in the
above equation for recovering the measure on $\gamma$, provided $u$
satisfies the assumptions. Note that the second potential is
harmonic in $G$, so that it gives no contribution to the measure on
$\gamma$ by Theorem A (or B, or C).

One of the most natural applications for Theorem \ref{thm1.1} is to
the equilibrium potential of a compact set $E\subset\C.$ If $E$ is
not polar, then the equilibrium measure $\mu_E$ exists and is a
unique positive Borel measure of mass one, whose potential $u_E$ is
equal to a constant $V_E$ everywhere on $E$, with a possible
exception of a polar subset (cf. \cite{HK,Ran}). Furthermore, if
$E:=D_+\cup\Gamma$ is the closure of a Jordan domain, then
$u_E(z)=V_E$ for all $z\in E,$ and $\supp\,\mu_E = \Gamma$, see
\cite{HK,Ran} and Theorem B. Let $g_E$ be the Green function of
$D_-,$ with pole at infinity. Then $u_E(z)=V_E+g_E(z)$ and
$g_E(z)=\log|\Phi(z)|,\ z\in D_-$, where $\Phi:D_-\to \{w:|w|>1\}$
is a conformal map satisfying $\Phi(\infty)=\infty$. Since $\Phi\in
E^1(D_-)$ for a rectifiable $\Gamma,$ see \cite{Dur,Gol,Pom}, we
have that $\nabla u_E = \nabla g_E \in e^1(D_-)$. Obviously, $\nabla
u_E =(0,0) \in e^1(D_+)$, because $u_E$ is constant in this domain.
Hence all assumptions of Theorem \ref{thm1.1} are satisfied. Also,
$\partial u_E/\partial n_+$ has zero boundary values a.e. on
$\Gamma$ and \eqref{1.4} takes the following familiar form.

\begin{example} \label{ex1.2}
Let $E\subset\C$ be the closure of a Jordan domain $D_+$ bounded by
a rectifiable Jordan curve $\Gamma.$ The equilibrium measure $\mu_E$
for $E$ is absolutely continuous with respect to the arclength
measure $ds$ on $\Gamma,$ and
\[
d\mu_E = \frac{1}{2\pi} \frac{\partial g_E}{\partial n_-} \,ds.
\]
\end{example}

More examples of applications to the equilibrium measures for energy
problems with external fields may be found in
\cite{ST,DKM,Pr05a,Pr05b} (see also references therein).

The problem of finding a measure from its potential is of interest
in higher dimensions too. For example, consider a positive Borel
measure $\sigma$ compactly supported in $\R^3$, and define its
Newtonian potential by
\[
U(x):=\int \frac{d\,\sigma(y)}{|x-y|},\quad x\in\R^3,
\]
where $|x-y|$ is the Euclidean distance between $x,y\in\R^3.$
Clearly, $U$ is superharmonic in $\R^3$ and harmonic in
$\R^3\setminus\supp\,\sigma,$ see \cite{HK} and \cite{Kel}. If $U\in
C^2(D)$ for a domain $D$ in $\R^3,$ then an analog of Theorem B
gives \cite[p. 156]{Kel}
\[
d\sigma = - \frac{1}{4\pi} \Delta U\,dV,
\]
where $dV$ is the volume measure. When $\sigma$ is supported on a
sufficiently smooth surface $S$, we have an analog of Theorem C
\cite[p. 164]{Kel}
\[
d\sigma = - \frac{1}{4\pi} \left( \frac{\partial U}{\partial n_+} +
\frac{\partial U}{\partial n_-} \right)dS,
\]
where $dS$ is the surface area measure, and $n_+,\ n_-$ are the
inner and the outer normals to $S$. Clearly, one should be able to
relax smoothness assumptions for the surface $S$, but a natural
analog of rectifiable curve (cf. Theorem \ref{1.1}) in this setting
is not obvious at all.

\section{Singular integral equations with Cauchy kernel}

Define the Cauchy transform of the measure $\mu$ by
\begin{align*}
C\mu(z) := \frac{1}{2\pi i} \int \frac{d\mu(t)}{t-z},\quad
z\in\C\setminus\supp\,\mu,
\end{align*}
which is an analytic in $\overline\C\setminus\supp\,\mu$ function
such that $C\mu(\infty)=0.$ It is well known that $C\mu$ is closely
related to the potential $u$ of \eqref{1.1}. Indeed, if we consider
a multivalued analytic function $F(z):= \int \log(z-t) \,d\mu(t)$,
then $u=\Re(F)$ and $C\mu = -F'$ in $\C\setminus\supp\,\mu.$ In
fact, these ideas are used in the proof of Theorem \ref{thm1.1}, see
the argument beginning with \eqref{3.1}. More discussion and history
of such relations may be found in Muskhelishvili \cite{Mus} and
Danilyuk \cite{Dan}, see also the work of Plemelj \cite{Ple1,Ple2},
Radon \cite{Rad} and Bertrand \cite{Ber}.

Suppose that $\supp\,\mu\subset\Gamma,$ where $\Gamma$ is a Jordan
rectifiable curve of length $l.$ We keep the same notation $D_+$ for
the bounded component of the complement of $\Gamma$, and $D_-$ for
the unbounded one. Let $\Gamma$ be parametrized by $z=z(s)$, where
$s\in[0,l]$ is the arclength parameter and $z'(s)$ is the unit
tangent vector to the curve. Recall that the tangent and normal
vectors exist almost everywhere on $\Gamma$ with respect to the
arclength measure. Assume further that $\mu$ is absolutely
continuous with respect to $ds$ with the density $f(z(s)) z'(s)$,
where $f\in L^1(\Gamma,ds).$ Then we can consider the singular
Cauchy integral of $f$ defined by
\begin{align} \label{2.1}
Sf(z) := \frac{1}{\pi i} \int_{\Gamma} \frac{f(t)dt}{t-z} =
\lim_{\varepsilon\to 0} \frac{1}{\pi i}
\int_{\Gamma_{\varepsilon}(z)} \frac{f(t)dt}{t-z},\quad z \in\Gamma,
\end{align}
where $\Gamma_{\varepsilon}(z):=\{t\in\Gamma:|t-z|\ge\varepsilon\}$,
i.e. the integral is understood as the Cauchy principal value, cf.
\cite{Mus}, \cite{Gol} and \cite{Dan}. Existence of $Sf$ is subject
to appropriate conditions on the function $f$ and the curve
$\Gamma$. For the Cauchy transform
\begin{align} \label{2.2}
Cf(z) = \frac{1}{2\pi i} \int_{\Gamma} \frac{f(t)\,dt}{t-z},\quad
z\in\C\setminus\Gamma,
\end{align}
let $Cf_+(\zeta)$ (respectively $Cf_-(\zeta)$) denote the
nontangential limit value from $D_+$ (respectively from $D_-$) at a
point $\zeta\in\Gamma$. A classical and important result of Privalov
\cite{Pri} gives connections between $Sf$ and the boundary values of
$Cf$.

{\bf Privalov's Fundamental Lemma.} {\em The nontangential boundary
limit values $Cf_+$ (or $Cf_-$) exist a.e. on $\Gamma$ if and only
if $Sf$ exists a.e. on $\Gamma.$ Furthermore, in the case of a.e.
existence, the Plemelj-Sokhotski formulas
\begin{align} \label{2.3}
Cf_+(z) - Cf_-(z) = f(z) \quad \mbox{and} \quad Cf_+(z) + Cf_-(z) =
Sf(z)
\end{align}
hold for a.e. $z\in\Gamma.$}

For the proof and thorough discussion of this result, we refer to
\cite{Pri,Gol,Dan}. If $1<p<\infty$, then another fundamental result
of David \cite{Dav} states that $S:L^p(\Gamma,ds)\to L^p(\Gamma,ds)$
is a bounded operator if and only if $\Gamma$ is Ahlfors regular.
The Ahlfors regularity condition means that there is a constant
$A>0$ such that for any disk $D_r$ of radius $r$ we have
\[
|\Gamma\cap D_r| \le A r,
\]
where $|\Gamma\cap D_r|$ is the length of the intersection. This
class of curves is sufficiently wide as it allows any angles (even
cusps), see Chapter 7 of Pommerenke \cite{Pom} for more on geometry.
A Jordan arc is said to be Ahlfors regular if it is a subarc of an
Ahlfors regular curve. In the sequel, we shall always make a natural
assumption that $\Gamma$ is Ahlfors regular, to insure the a.e.
existence of $Sf\in L^p(\Gamma,ds)$ for $f\in L^p(\Gamma,ds)$, and
the validity of \eqref{2.3}. If $f$ belongs to the class
$H_{\alpha}(\Gamma),\ 0<\alpha<1,$ of H\"older continuous functions
on $\Gamma,$ then $Sf\in H_{\alpha}(\Gamma)$ for Ahlfors regular
$\Gamma.$ This generalization of the Plemelj-Privalov theorem was
proved by Salaev \cite{Sal}. A complete description of curves that
allow the Cauchy singular operator to preserve moduli of continuity
is contained in Guseinov \cite{Gus}.

Singular integral equations with Cauchy kernel arise naturally in
the problem of finding a measure from its potential. For example,
the equation
\begin{align} \label{2.4}
Sf(z) = \frac{1}{\pi i} \int_{\Gamma} \frac{f(t)\,dt}{t-z} = g(z)
\end{align}
was used repeatedly to find the weighted equilibrium measures in
\cite{ST,DKM,MS,Pr05a,Pr05b} and many other papers. Here, the
function $g$ is either obtained by differentiation of the known
values for the potential $u$ on the support of $\mu$, or found from
the Plemelj-Sokhotski formula \eqref{2.3} as $g(z)=Cf_+(z) +
Cf_-(z).$ The solution of \eqref{2.4} for a closed contour $\Gamma$
is well known (see, e.g., \cite[\S 27]{Mus}), and represents the
self-inversive property of the operator $S.$

\begin{proposition} \label{prop2.1}
Suppose that $\Gamma$ is an Ahlfors regular closed Jordan curve, and
that $g\in L^p(\Gamma,ds),\ 1<p<\infty$. The equation $Sf=g$ on
$\Gamma$ has the unique solution $f=Sg \in L^p(\Gamma,ds)$.
\end{proposition}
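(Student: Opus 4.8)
The plan is to reduce the statement to the \emph{self-inversive} identity $S^2=I$ on $L^p(\Gamma,ds)$, from which both existence and uniqueness follow at once. Since $\Gamma$ is Ahlfors regular, David's theorem \cite{Dav} guarantees that $S\colon L^p(\Gamma,ds)\to L^p(\Gamma,ds)$ is bounded, so $f:=Sg\in L^p(\Gamma,ds)$ is a legitimate candidate. Granting $S^2=I$, applying $S$ to $Sf=g$ yields $f=S^2f=Sg$, which gives uniqueness; conversely, $f:=Sg$ satisfies $Sf=S^2g=g$, which gives existence. Thus the whole content of the proposition is the operator identity $S^2=I$.

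To establish it, I would fix $f\in L^p(\Gamma,ds)$ and pass to the Cauchy transform $Cf$ of \eqref{2.2}, which is analytic in each of $D_+$ and $D_-$ and vanishes at $\infty$. Write $\phi_+:=Cf_+$ and $\phi_-:=Cf_-$ for its nontangential boundary values, which exist a.e.\ by Privalov's Fundamental Lemma together with the a.e.\ existence of $Sf$. The Plemelj--Sokhotski formulas \eqref{2.3} then read
\begin{align*}
f=\phi_+-\phi_-,\qquad Sf=\phi_++\phi_-.
\end{align*}
The functions $\phi_\pm$ are the boundary values of the Smirnov-class functions $Cf|_{D_+}\in E^p(D_+)$ and $Cf|_{D_-}\in E^p(D_-)$, the latter with value $0$ at $\infty$.

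The heart of the argument is a reproducing lemma for the Cauchy transform on the Smirnov classes: if $h\in E^p(D_+)$ with boundary values $h^*$, then $\frac{1}{2\pi i}\int_\Gamma \frac{h^*(t)}{t-z}\,dt$ equals $h(z)$ for $z\in D_+$ and $0$ for $z\in D_-$; dually, if $k\in E^p(D_-)$ with $k(\infty)=0$, then the same integral equals $0$ for $z\in D_+$ and $-k(z)$ for $z\in D_-$. Both halves amount to the Cauchy integral formula and Cauchy's theorem in the Smirnov class on a rectifiable $\Gamma$ (see \cite{Dur,Gol,Dan}). Applying this to $h:=Cf|_{D_+}$ and reading off boundary values gives $C(\phi_+)_+=\phi_+$ and $C(\phi_+)_-=0$, whence $S\phi_+=C(\phi_+)_++C(\phi_+)_-=\phi_+$ by \eqref{2.3}; applying it to $k:=Cf|_{D_-}$ gives $C(\phi_-)_+=0$ and $C(\phi_-)_-=-\phi_-$, whence $S\phi_-=-\phi_-$. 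Therefore
\begin{align*}
S(Sf)=S\phi_++S\phi_-=\phi_+-\phi_-=f,
\end{align*}
which is the desired identity $S^2=I$.

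The main obstacle is not the algebra above but the two supporting facts that make it meaningful on a merely Ahlfors regular curve: that the Cauchy transform of an $L^p(\Gamma,ds)$ density actually lands in $E^p(D_+)\oplus E^p(D_-)$, and that the reproducing (Cauchy integral) formula survives the passage to nontangential boundary values on $\Gamma$. Both rest on the rectifiability of $\Gamma$ and on David's boundedness theorem, which simultaneously supplies $Sf\in L^p(\Gamma,ds)$, the a.e.\ existence needed for \eqref{2.3}, and the $L^p$ control of $\phi_\pm$ required to invoke the Smirnov-class theory. For smooth $\Gamma$ all of this is classical (cf.\ \cite[\S 27]{Mus}); the point here is to verify that each step goes through verbatim under Ahlfors regularity.
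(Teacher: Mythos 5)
Your proposal is correct and follows essentially the same route as the paper: David's theorem for boundedness, the Plemelj--Sokhotski formulas \eqref{2.3}, and the reproducing/annihilating property of the Cauchy transform on $E^p(D_\pm)$ (Theorem 10.4 of \cite{Dur}) to establish $S^2=I$, with existence and uniqueness then read off exactly as in the paper's proof. Your formulation via the eigenvalue identities $S\phi_+=\phi_+$ and $S\phi_-=-\phi_-$ is just a repackaging of the paper's direct computation of $C(Sg)$ in $D_+$ and $D_-$, and you rightly flag as the only delicate point the same fact the paper invokes, namely that $Cf|_{D_\pm}\in E^p(D_\pm)$ on an Ahlfors regular curve.
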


In many applications to recovering a measure from its potential,
\eqref{2.4} holds on the support of the measure, which may be
different from a closed curve. A rather common situation is when the
support consists of several arcs, see \cite{ST,DKM,MS,Pr05a,Pr05b}.
Let $L:=\cup_{j=1}^N \gamma(a_j,b_j)$ be the union of $N$ disjoint
Ahlfors regular arcs $\gamma(a_j,b_j)$ with endpoints $a_j$ and
$b_j.$ For a function $f\in L^1(L,ds),$ we define the Cauchy
singular integral operator $S_L f$ on $L$ similarly to \eqref{2.1}.
We can assume that $L\subset\Gamma,$ where $\Gamma$ is an Ahlfors
regular closed Jordan curve with interior $D_+$ and exterior $D_-.$
It is always possible to extend $f$ from $L$ to $\Gamma$ by letting
$f(z) = 0,\ z\in\Gamma\setminus L,$ and view the operator $S_L$ as a
restriction of $S$, by setting $S_Lf=Sf$ for $f\in L^1(\Gamma,ds), \
f\vert_{\Gamma\setminus L}\equiv 0.$ Hence Privalov's Fundamental
Lemma and many other facts easily carry over to the case of $S_L$.

For $R(z):=\prod_{j=1}^N (z-a_j)(z-b_j),$ we consider the branch of
$\sqrt{R(z)}$ defined in the domain $\C\setminus L$ by
$\lim_{z\to\infty} \sqrt{R(z)}/z^N = 1.$ By the values of
$\sqrt{R(z)}$ on $L$, we understand the boundary limit values from
$D_+.$ A general solution of the equation $S_Lf=g$ for H\"older
continuous $g$ on smooth arcs was first found by Muskhelishvili
\cite[Chap. 11]{Mus}. The case of $L^p$ solutions was later
considered by Hvedelidze \cite{Hve}. We generalize their ideas to
prove the following.

\begin{theorem} \label{thm2.2}
Let $L:=\cup_{j=1}^N \gamma(a_j,b_j)$ be a union of disjoint Ahlfors
regular arcs, and let $R(z):=\prod_{j=1}^N (z-a_j)(z-b_j).$ If $g\in
L^p(L,ds),\ 2<p<\infty$, then any solution of the equation $S_Lf=g$
in $L^1(L,ds)$ has the form
\begin{align} \label{2.5}
f(z) = \frac{1}{\pi i \sqrt{R(z)}} \int_{L}
\frac{g(t)\sqrt{R(t)}\,dt}{t-z} + \frac{P_{N-1}(z)}{\sqrt{R(z)}}
\quad \mbox{a.e. on }L,
\end{align}
where $P_{N-1}\in\C_{N-1}[z].$
\end{theorem}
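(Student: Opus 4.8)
The plan is to reduce the equation on the union of arcs $L$ to a solvable problem by exploiting the algebraic function $\sqrt{R(z)}$, following the classical Muskhelishvili–Hvedelidze approach but working in the $L^p$ setting on Ahlfors regular arcs. The key structural feature is that $\sqrt{R(z)}$ changes sign as one crosses each arc $\gamma(a_j,b_j)$: since $R$ has a simple zero at each endpoint and the branch is chosen with $\sqrt{R(z)}/z^N\to 1$ at infinity, the boundary values from $D_+$ and from $D_-$ satisfy $\sqrt{R}_-(z) = -\sqrt{R}_+(z)$ for a.e. $z\in L$. This sign flip is exactly what is needed to convert the self-inversive singular operator $S_L$ into a Cauchy transform whose jump can be read off from the Plemelj–Sokhotski formulas \eqref{2.3}.

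**First I would** introduce the auxiliary analytic function
\begin{align*}
\Phi(z) := \frac{1}{2\pi i}\int_L \frac{g(t)\sqrt{R(t)}\,dt}{t-z},\quad z\in\C\setminus L,
\end{align*}
which is the Cauchy transform of the density $g\sqrt{R}$. Here I must first check that $g\sqrt{R}\in L^1(L,ds)$, and in fact I would aim for $g\sqrt{R}\in L^q(L,ds)$ for some $q>1$: since $g\in L^p$ with $2<p<\infty$ and $\sqrt{R}$ is bounded on the compact set $L$, Hölder's inequality gives $g\sqrt{R}\in L^p$, so the Cauchy integral is well defined and Privalov's Fundamental Lemma applies. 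By \eqref{2.3}, the boundary values satisfy $\Phi_+ - \Phi_- = g\sqrt{R}$ and $\Phi_+ + \Phi_- = S_L(g\sqrt{R})$ a.e. on $L$. Then I would form the candidate solution from \eqref{2.5} and compute $S_L f$ by substituting the Cauchy-transform representation, using the sign relation $\sqrt{R}_\mp = -\sqrt{R}_\pm$ to combine the boundary values of $\Phi$ through the factor $1/\sqrt{R}$. The algebra should collapse, via the self-inversive identity $S_L^2 = \mathrm{id}$ built into the second Plemelj formula, to show $S_L f = g$, confirming that the integral term in \eqref{2.5} is a particular solution.

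**Next** I would characterize the homogeneous solutions, i.e. solve $S_L h = 0$ in $L^1(L,ds)$. The claim is that these are precisely $h(z) = P_{N-1}(z)/\sqrt{R(z)}$ with $P_{N-1}\in\C_{N-1}[z]$. The forward direction is a computation: for such $h$, the function $h\sqrt{R} = P_{N-1}$ is entire, so the associated Cauchy transform $\frac{1}{2\pi i}\int_L \frac{P_{N-1}(t)}{t-z}\,dt$ has boundary values whose sum vanishes on $L$ after incorporating the sign flip of $\sqrt{R}$, giving $S_L h = 0$. For the converse — that every $L^1$ solution of $S_L h = 0$ has this form — I would argue that $h\sqrt{R}$ extends to an analytic function on $\C\setminus L$ with no jump across $L$ (the jump being governed by $\Phi_+-\Phi_-$, which vanishes when $S_L h=0$ under the sign relation), hence is analytic across $L$ and therefore entire except possibly at the endpoints $a_j,b_j$, where $1/\sqrt{R}$ can produce at worst a singularity of order $1/2$. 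Controlling the growth at infinity and at the endpoints then forces $h\sqrt{R}$ to be a polynomial of degree at most $N-1$.

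**The hard part will be** the regularity and integrability bookkeeping near the endpoints $a_j, b_j$ and the justification that "any $L^1$ solution" must arise from a Cauchy transform with the expected jump behavior, rather than merely checking that \eqref{2.5} furnishes solutions. On smooth arcs with Hölder data, Muskhelishvili handles the endpoints by explicit local expansions of $\sqrt{R}$; here, because $\Gamma$ is only Ahlfors regular and $g$ is only $L^p$, I cannot appeal to pointwise Hölder estimates and must instead use the boundedness of $S$ on $L^p(\Gamma,ds)$ (David's theorem) together with the extension of $f$ by zero to the closed curve $\Gamma$. The constraint $p>2$ is almost certainly dictated by exactly this step: the factor $1/\sqrt{R}$ behaves like an inverse-square-root singularity at each endpoint, so to guarantee that the product $\frac{1}{\sqrt{R}}\int_L \frac{g\sqrt{R}}{t-z}\,dt$ lands in $L^1(L,ds)$ — and that the removable-singularity argument across $L$ is legitimate — one needs the Cauchy transform $\Phi$ to have boundary values in a space better than $L^2$, which David's theorem delivers for $g\sqrt{R}\in L^p$ precisely when $p>2$ compensates for the $L^2_{\mathrm{loc}}$ endpoint singularity of $1/\sqrt{R}$. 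Making this endpoint analysis rigorous on a merely rectifiable, Ahlfors regular arc, where classical Muskhelishvili estimates are unavailable, is where the real work lies.
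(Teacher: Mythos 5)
Your overall route is the paper's own: the same auxiliary Cauchy transform $F:=C_L(g\sqrt{R})$, the same sign relation $(\sqrt{R})_+=-(\sqrt{R})_-$ on $L$, David's theorem plus Privalov's lemma in the $L^p$ setting, and, for the homogeneous equation, continuation of $\sqrt{R}\,C_Lh$ across $L$ followed by a growth count at infinity --- this is exactly Lemma \ref{lem3.1} (note that you write ``$h\sqrt{R}$ extends'' where you must mean $\sqrt{R}\,C_Lh$; the function $h$ itself lives only on $L$). The integrability bookkeeping you flag as the hard part is also the paper's: $S_L(g\sqrt{R})\in L^p$, $p>2$, by \cite{Dav}, paired via H\"older with $1/\sqrt{R}\in L^q(L,ds)$ for every $q<2$ (it is \emph{not} locally $L^2$ at the endpoints, contrary to your phrasing), yields $f_0\in L^r(L,ds)$ for some $r>1$, which is what makes $C_Lf_0$ and Privalov's lemma available.

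The genuine gap is in your verification that the integral term of \eqref{2.5} is a particular solution. You invoke ``the self-inversive identity $S_L^2=\mathrm{id}$,'' but that identity is false on open arcs: it holds only for closed curves (Proposition \ref{prop2.1}), and it is flatly incompatible with the $N$-dimensional kernel you yourself establish a few lines later, since $S_L^2=\mathrm{id}$ would force $S_L$ to be injective. The Plemelj formulas do give, for $\Psi:=F/\sqrt{R}$, the relations $\Psi_++\Psi_-=g$ and $\Psi_+-\Psi_-=f_0$ a.e. on $L$; but to conclude $S_Lf_0=g$ you need $S_Lf_0=(C_Lf_0)_++(C_Lf_0)_-$, i.e.\ you must first prove $C_Lf_0=\Psi$. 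That identification is the crux and it is not algebra: both functions vanish at infinity and have the same jump $f_0$ across $L$, so the difference $H:=\Psi-C_Lf_0$ satisfies $H_+=H_-$ a.e.\ with $L^1$ boundary data, extends to an entire function by a Morera-type theorem of Zalcman \cite{Zal}, and then vanishes by Liouville --- this is precisely where the a priori bound $f_0\in L^r$, $r>1$, is consumed. That this step cannot be waved away is demonstrated by Corollary \ref{cor2.3}: for the mirror candidate $\sqrt{R}\,S_L(g/\sqrt{R})$, for which ``the algebra collapses'' just as plausibly, one in fact gets $S_Lf=g+P$ with a generically nonzero $P\in\C_{N-1}[z]$, so whether a polynomial obstruction appears is decided exactly by the continuation argument you omitted. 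A smaller soft spot: in the homogeneous converse, with only $L^1$ data you cannot assert pointwise that the endpoint singularity of $\sqrt{R}\,C_Lh$ is ``at worst of order $1/2$''; the Zalcman argument continues the function through the endpoints without any pointwise endpoint estimate, which is why the paper uses it.
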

Here, $\C_{N-1}[z]$ denotes the set of polynomials with complex
coefficients of degree at most $N-1.$

It is of interest that certain solutions may also be written in a
different form.

\begin{corollary} \label{cor2.3}
Let $L$ and $g$ be as in Theorem \ref{thm2.2}. The function
\begin{align} \label{2.6}
f_0(z) = \frac{\sqrt{R(z)}}{\pi i} \int_{L}
\frac{g(t)\,dt}{\sqrt{R(t)}(t-z)}, \quad z\in L,
\end{align}
is a solution of $S_Lf=g$ in $L^1(L,ds)$ if and only if
\begin{align} \label{2.7}
\int_{L} \frac{t^k g(t)\,dt}{\sqrt{R(t)}} = 0,  \qquad
k=0,\ldots,N-1.
\end{align}
\end{corollary}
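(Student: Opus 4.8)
The plan is to connect the explicit function $f_0$ with the particular solution supplied by Theorem~\ref{thm2.2}, namely $f_*(z):=\frac{1}{\pi i\sqrt{R(z)}}\int_{L}\frac{g(t)\sqrt{R(t)}\,dt}{t-z}$ (the right-hand side of \eqref{2.5} with $P_{N-1}\equiv 0$), and then to read off solvability from a degree count. First I would record that the hypothesis $2<p<\infty$ forces $1/\sqrt{R}\in L^{p'}(L,ds)$ near the endpoints, where $p'=p/(p-1)<2$ is the conjugate exponent and $1/\sqrt{R}$ has only an integrable $|z-a_j|^{-1/2}$-type singularity there; hence $g/\sqrt{R}\in L^1(L,ds)$ by H\"older, so all integrals below, including the moments in \eqref{2.7}, converge absolutely.

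The key algebraic step is the factorization $R(z)-R(t)=(z-t)\,Q(z,t)$, with $Q(z,t):=\sum_{m=1}^{2N}c_m\sum_{i=0}^{m-1}z^i t^{m-1-i}$ and $R(z)=\sum_{m=0}^{2N}c_m z^m$, $c_{2N}=1$. Writing $\sqrt{R(z)}=R(z)/\sqrt{R(z)}$ inside $f_0$ and substituting $R(z)=R(t)+(z-t)Q(z,t)$ separates the singular integrand into a singular part and a regular (polynomial) part:
\begin{align*}
f_0(z)=\frac{1}{\pi i\sqrt{R(z)}}\int_{L}\frac{R(z)\,g(t)\,dt}{\sqrt{R(t)}(t-z)}=f_*(z)-\frac{\widetilde{P}(z)}{\pi i\,\sqrt{R(z)}},
\end{align*}
where $\widetilde{P}(z):=\int_{L}\frac{g(t)\,Q(z,t)}{\sqrt{R(t)}}\,dt$ is a genuine polynomial in $z$ of degree at most $2N-1$; the point is that, once the factor $1/(t-z)$ is removed, the $t$-integral defining $\widetilde{P}$ has an integrable integrand and needs no principal value, the singular part of the Cauchy integral being confined entirely to $f_*$. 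In every case $f_0=f_*-\widetilde{P}/(\pi i\sqrt{R})\in L^1(L,ds)$, since $f_*$ is a solution and $\widetilde{P}/\sqrt{R}$ is integrable at the endpoints.

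Now I would invoke Theorem~\ref{thm2.2} in both directions: a function is a solution of $S_Lf=g$ in $L^1(L,ds)$ exactly when it equals $f_*+P_{N-1}/\sqrt{R}$ for some $P_{N-1}\in\C_{N-1}[z]$. Comparing this with the identity above, $f_0$ is a solution if and only if $\widetilde{P}/\sqrt{R}$ coincides a.e. on $L$ with some $P_{N-1}/\sqrt{R}$, equivalently $\widetilde{P}=P_{N-1}$ as polynomials (two polynomials agreeing a.e. on the infinite set $L$ are identical), i.e. if and only if $\deg\widetilde{P}\le N-1$. Finally, expanding $Q$ gives $\widetilde{P}(z)=\sum_{m=1}^{2N}c_m\sum_{i=0}^{m-1}z^i M_{m-1-i}$ with $M_k:=\int_{L}t^k g(t)\,dt/\sqrt{R(t)}$, so the coefficient of $z^i$ equals $\sum_{m=i+1}^{2N}c_m M_{m-1-i}$. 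The coefficients of $z^{2N-1},z^{2N-2},\dots,z^{N}$ are then $M_0,\ M_1+c_{2N-1}M_0,\ \dots$, a triangular linear system in $(M_0,\dots,M_{N-1})$ with unit diagonal; hence $\deg\widetilde{P}\le N-1$ holds if and only if $M_0=\cdots=M_{N-1}=0$, which is precisely \eqref{2.7}.

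The polynomial bookkeeping is routine; the steps that need care are the legitimacy of splitting the singular integral into a singular piece and a regular (polynomial) piece, so that $\widetilde{P}$ is truly a polynomial and the principal value survives only in $f_*$, together with the use of the converse contained in the proof of Theorem~\ref{thm2.2} — that every function of the form \eqref{2.5} is in fact a solution — which is exactly what makes the ``if'' direction of the corollary work.
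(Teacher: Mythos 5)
Your proof is correct, but it takes a genuinely different route from the paper. The paper's proof is complex-analytic: it computes the Cauchy transform $C_Lf_0$ by collapsing a contour $\Lambda$ around the arcs and evaluating residues at $z$ and at $\infty$, using the degree-$N$ polynomial part $Q$ of $\sqrt{R}$ at infinity and the divided difference $Q(z,t)=\bigl(Q(z)-Q(t)\bigr)/(z-t)$ of degree $N-1$; this yields the \emph{modified equation} $S_Lf_0=g+P$ of \eqref{3.11}, with $P$ given explicitly by \eqref{3.10}, after which $P\equiv 0$ is matched to \eqref{2.7} via the linear independence of the monic polynomials $q_k$. You instead stay entirely on $L$ and use the purely algebraic division identity $R(z)-R(t)=(z-t)Q(z,t)$ with the \emph{full} polynomial $R$ (so your $Q(z,t)$ has degree $2N-1$, not $N-1$), which exhibits $f_0$ pointwise a.e.\ as $f_*-\widetilde{P}/(\pi i\sqrt{R})$ with $\widetilde{P}$ a genuine polynomial; solvability then reduces, by the classification in Theorem \ref{thm2.2} (whose converse direction you correctly flag as coming from its proof together with Lemma \ref{lem3.1}), to the degree condition $\deg\widetilde{P}\le N-1$, detected by a unit-triangular system in the moments $M_0,\dots,M_{N-1}$. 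Your splitting of the principal value is legitimate exactly as you say, since the subtracted integrand $g(t)Q(z,t)/\sqrt{R(t)}$ is nonsingular and absolutely integrable, so the singular part sits entirely in $f_*$. What each approach buys: yours is more elementary (no contour deformation, no residue at infinity, no extra appeal to the Plemelj--Sokhotski formulas beyond what Theorem \ref{thm2.2} already supplies) and delivers the corollary more cheaply; the paper's heavier computation produces the unconditional identity $S_Lf_0=g+P$, which is not merely scaffolding---it is reused verbatim at the start of the proof of Corollary \ref{cor2.5} to characterize bounded solutions, whereas your pointwise identity would require an additional computation of $S_L\bigl(\Pi/\sqrt{R}\bigr)$ for $N\le\deg\Pi\le 2N-1$ to recover that statement.
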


If the right hand side of the equation $S_Lf=g$ is H\"older (or
Dini) continuous, then the continuity properties are also preserved
for the solutions, provided that we stay away from the endpoints of
$L.$ This follows from the corresponding results for the operator
$S$ on closed contours, see \cite{Sal,Gus}.

\begin{corollary} \label{cor2.4}
Let $L$ be as in Theorem \ref{thm2.2}. If $g \in H_{\alpha}(L),\
0<\alpha<1,$ then for any compact set $E\subset L\setminus
\{a_j,b_j\}_{j=1}^N$ the general solution \eqref{2.5} belongs to
$H_{\alpha}\left(E\right).$
\end{corollary}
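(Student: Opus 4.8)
The plan is to split the general solution \eqref{2.5} as $f=f_1+f_2$, where
\[
f_1(z)=\frac{S_L h(z)}{\sqrt{R(z)}},\qquad h:=g\sqrt{R},\qquad f_2(z)=\frac{P_{N-1}(z)}{\sqrt{R(z)}},
\]
and to treat the two terms separately. The multiplier $1/\sqrt{R}$ is harmless on $E$: since $R$ is a polynomial vanishing only at the endpoints $\{a_j,b_j\}$ and $\mathrm{dist}(E,\{a_j,b_j\})>0$, we have $|R(z)|\ge c>0$ on $E$. Moreover $\sqrt{R}$ is the boundary value from $D_+$ of a function holomorphic in $\C\setminus L$, so along each arc its restriction to $E$ is the composition of the smooth map $w\mapsto\sqrt{w}$ (on a compact subset of $\C\setminus\{0\}$) with the Lipschitz polynomial $R$; hence $\sqrt{R}$, and therefore also $1/\sqrt{R}$, is Lipschitz, in particular $H_\alpha$, on $E$. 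Since $P_{N-1}$ is a polynomial and products of bounded $H_\alpha$ functions are $H_\alpha$, this already gives $f_2\in H_\alpha(E)$. It remains to prove $S_Lh\in H_\alpha(E)$, for then $f_1=(S_Lh)/\sqrt{R}\in H_\alpha(E)$ as well.

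The main point is that one cannot simply invoke the preservation of $H_\alpha$ for $h$ on all of $L$: because $\sqrt{R}$ vanishes like $|t-a_j|^{1/2}$ at the endpoints, $h$ is globally only $H_{\min(\alpha,1/2)}(L)$, and a direct application would lose the exponent when $\alpha>\tfrac12$. The remedy is a localization that isolates the endpoint degeneracy. Set $\eta:=\tfrac12\,\mathrm{dist}(E,\{a_j,b_j\})>0$ and choose a Lipschitz cutoff $\chi$ on $L$, built from the Euclidean distance to $E$, with $\chi\equiv1$ on $\{t\in L:\mathrm{dist}(t,E)\le\eta/2\}$ and $\supp\,\chi\subset\{t\in L:\mathrm{dist}(t,E)\le\eta\}$; in particular $\supp\,\chi$ stays bounded away from the endpoints. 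Embedding $L$ into an Ahlfors regular closed Jordan curve $\Gamma$ as in Section 2 and extending all functions by zero to $\Gamma\setminus L$, we write $S_Lh=S_\Gamma(\chi h)+S_\Gamma\big((1-\chi)h\big)$.

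On $\supp\,\chi$ the factor $\sqrt{R}$ is Lipschitz (by the argument above, since this set avoids the endpoints), so $h=g\sqrt{R}$ is $H_\alpha$ there; multiplying by the Lipschitz function $\chi$ and extending by zero, which is continuous because $\chi$ vanishes before $\supp\,\chi$ reaches the endpoints, yields $\chi h\in H_\alpha(\Gamma)$. By Salaev's generalization of the Plemelj--Privalov theorem for Ahlfors regular curves \cite{Sal,Gus}, $S_\Gamma(\chi h)\in H_\alpha(\Gamma)$, hence on $E$. For the second term, $(1-\chi)h$ vanishes on $\{t:\mathrm{dist}(t,E)\le\eta/2\}$, so for every $z$ with $\mathrm{dist}(z,E)<\eta/4$ the integrand in
\[
S_\Gamma\big((1-\chi)h\big)(z)=\frac{1}{\pi i}\int_\Gamma\frac{(1-\chi(t))h(t)}{t-z}\,dt
\]
has no singularity, the integration effectively running over $\{t:|t-z|\ge\eta/4\}$. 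Consequently this function is holomorphic in a full complex neighborhood of $E$, hence Lipschitz and a fortiori $H_\alpha$ on $E$. Adding the two contributions gives $S_Lh\in H_\alpha(E)$, and multiplying by $1/\sqrt{R}\in H_\alpha(E)$ completes the proof. The one step requiring care is the sharpness of the exponent: without the cutoff one would only recover $H_{\min(\alpha,1/2)}$, and the localization is precisely what confines the square-root degeneracy to the smooth remainder $S_\Gamma((1-\chi)h)$ while leaving a genuinely $H_\alpha$ function for Salaev's theorem.
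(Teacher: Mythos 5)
Your argument is correct, and its skeleton is the same as the paper's: split \eqref{2.5} as $S_L(g\sqrt{R})/\sqrt{R}+P_{N-1}/\sqrt{R}$, note that $1/\sqrt{R}$ is Lipschitz on $E$, reduce to showing $S_L(g\sqrt{R})\in H_\alpha(E)$ by embedding $L$ in a closed Ahlfors regular curve $\Gamma$, invoking Salaev's theorem \cite{Sal,Gus} there, and discarding the contribution of the density supported away from $E$, whose Cauchy integral is analytic in a neighborhood of $E$. Where you genuinely depart from the paper is the cutoff localization, and this is a substantive refinement rather than a cosmetic one. The paper's proof simply asserts $G:=g\sqrt{R}\in H_\alpha(L)$ and continues it to $\tilde G\in H_\alpha(\Gamma)$; as you correctly observe, this assertion fails for $\alpha>1/2$ (take $g\equiv 1$: near an endpoint, $\sqrt{R(t)}\sim c\,(t-a_j)^{1/2}$ is Hölder of exponent $1/2$ and no better), so the paper's argument, read literally, only yields $H_{\min(\alpha,1/2)}(E)$. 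Your cutoff $\chi$ confines the square-root degeneracy to $(1-\chi)h$, whose singular integral is holomorphic near $E$ because its support stays at distance $\ge\eta/2$ from $E$, while $\chi h$ is genuinely $H_\alpha$, so Salaev's theorem returns the full exponent for every $0<\alpha<1$. In short, your route proves the corollary as stated, including the range $1/2<\alpha<1$ where the paper's own proof has a gap.

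Two details of your write-up need tightening, neither affecting the substance. First, $w\mapsto\sqrt{w}$ has no single-valued branch on an arbitrary compact subset of $\C\setminus\{0\}$, so the composition argument for the Lipschitz continuity of $\sqrt{R}$ on $E$ is not quite right as stated; the correct observation is that near each point of $L\setminus\{a_j,b_j\}_{j=1}^N$ the boundary values $(\sqrt{R})_+$ coincide with a single-valued analytic branch of $\sqrt{R}$ in a full disk, hence are locally Lipschitz, and compactness of $E$ (together with $|R|\ge c>0$ there) finishes. Second, the zero extension of $\chi h$ to $\Gamma\setminus L$ is Hölder with respect to the Euclidean metric only if $\Gamma\setminus L$ stays at positive distance from $\supp\,\chi$; this is not automatic, since a Jordan curve may pass Euclidean-close to itself even though it does not intersect itself. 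The repair is standard and parallels the paper: replace the zero extension by a Whitney--McShane $H_\alpha$ extension of $\chi h$ from $L$ to $\Gamma$, apply Salaev to it, and subtract the Cauchy integral over $\Gamma\setminus L$ of the extension, which is analytic near $E$ because $E$ and $\overline{\Gamma\setminus L}$ are disjoint compacta. With these two adjustments your proof is complete and, on the point of the Hölder exponent, sharper than the one in the paper.
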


It is important for applications to find the bounded solution of the
equation $S_Lf=g$, and describe conditions for its existence. For
example, bounded solutions play a special role in finding the
weighted equilibrium measures for ``good" weights
\cite{ST,DKM,MS,Pr05a,Pr05b}.

\begin{corollary} \label{cor2.5}
Assume that $L$ satisfies the conditions of Theorem \ref{thm2.2},
and that $g \in H_{\alpha}(L),\ \alpha>0.$ A bounded solution
$f_0\in L^{\infty}(L,ds)$ for $S_Lf=g$ exists if and only if
\eqref{2.7} holds true.

Furthermore, if \eqref{2.7} is satisfied, then
\begin{align} \label{2.8}
f_0(z) = \frac{\sqrt{R(z)}}{\pi i} \int_{L}
\frac{g(t)\,dt}{\sqrt{R(t)}(t-z)}, \quad z\in L,
\end{align}
where $f_0\in C(L)$ and $f_0(a_j)=f_0(b_j)=0,\ j=1,\ldots,N.$
\end{corollary}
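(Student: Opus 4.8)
The plan is to recast the equation $S_Lf=g$ as a jump (Riemann--Hilbert) problem for the Cauchy transform $F:=Cf$ and to solve it using the branch $\sqrt{R}$. By the Plemelj--Sokhotski formulas \eqref{2.3}, a function $f\in L^1(L,ds)$ solves $S_Lf=g$ precisely when its Cauchy transform $F$, which is analytic in $\C\setminus L$ with $F(\infty)=0$, satisfies $F_++F_-=g$ on $L$, where $F_\pm$ are the nontangential boundary values from $D_\pm$. Since each arc is a branch cut for $\sqrt{R}$ we have $\sqrt R_+=-\sqrt R_-$ on $L$, so the quotient $G:=F/\sqrt R$ has jump $G_+-G_-=(F_++F_-)/\sqrt R_+=g/\sqrt R_+$. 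Introducing $K(z):=\frac{1}{2\pi i}\int_L \frac{g(t)/\sqrt R_+(t)}{t-z}\,dt$, which carries the same jump, the difference $G-K$ is free of jumps across $L$ and hence single-valued analytic off the endpoints. Both directions of the corollary will flow from controlling $G-K$ at the endpoints and at infinity, and the outcome will be consistent with Corollary \ref{cor2.3}.

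For necessity of \eqref{2.7}, I would start from a bounded solution $f\in L^\infty(L,ds)$. Boundedness of $f$ forces $F=Cf$ to grow at most logarithmically at each endpoint $c\in\{a_j,b_j\}$, so $G=F/\sqrt R=o(|z-c|^{-1})$; likewise the density $g/\sqrt R_+$ has an integrable $|t-c|^{-1/2}$ singularity, so $K=O(|z-c|^{-1/2})$. Thus $G-K$ is single-valued and analytic near each $c$ with a singularity weaker than a pole, hence removable; as $G-K\to0$ at infinity it vanishes identically, giving $F=\sqrt R\,K$. Expanding at infinity, $F=O(1/z)$ because $f\in L^1$, while $\sqrt R(z)=z^N(1+O(1/z))$ and $K(z)=-\frac{1}{2\pi i}\sum_{m\ge0}\mu_m z^{-m-1}$ with $\mu_m=\int_L t^m g/\sqrt R_+\,dt$. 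Requiring the coefficients of $z^{N-1},\dots,z^0$ in $\sqrt R\,K$ to vanish yields a triangular system that forces $\mu_0=\cdots=\mu_{N-1}=0$, which is exactly \eqref{2.7}. Uniqueness of the bounded solution comes for free: two of them differ by a bounded solution of the homogeneous equation, which by Theorem \ref{thm2.2} has the form $P_{N-1}/\sqrt R$ and can stay bounded at all $2N$ endpoints only if $P_{N-1}\equiv0$.

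Conversely, assuming \eqref{2.7}, I would define $F:=\sqrt R\,K$ directly. Now $\mu_0=\cdots=\mu_{N-1}=0$ gives $K=O(z^{-N-1})$, hence $F=O(1/z)$, so $F$ coincides with the Cauchy transform of its own jump $f:=F_+-F_-$. Using $\sqrt R_+=-\sqrt R_-$ and $K_+-K_-=g/\sqrt R_+$, a short computation gives $F_++F_-=\sqrt R_+(K_+-K_-)=g$, so $S_Lf=g$, and $f=\sqrt R_+(K_++K_-)$ is precisely the function \eqref{2.6}$=$\eqref{2.8} (in agreement with Corollary \ref{cor2.3}). It remains to show $f$ is bounded and vanishes at the endpoints. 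Near an endpoint $c$ the density $g/\sqrt R_+$ behaves like $g(c)(t-c)^{-1/2}$ times a nonvanishing Hölder factor, so $K$ behaves like $(z-c)^{-1/2}$, which is exactly cancelled by $\sqrt R(z)\sim(z-c)^{1/2}$; thus $F=\sqrt R\,K$ is bounded near $c$. A bounded analytic function on the slit disc around the arc tip extends continuously to $c$ with a single value (unfold the slit via $\zeta=(z-c)^{1/2}$), so $F_+(c)=F_-(c)$ and hence $f(c)=F_+(c)-F_-(c)=0$. Continuity of $f$ on $L$ away from the endpoints is already supplied by Corollary \ref{cor2.4}, so $f\in C(L)$ with $f(a_j)=f(b_j)=0$.

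The hard part will be making the endpoint asymptotics rigorous on a merely Ahlfors regular arc, where the classical Muskhelishvili estimates for smooth arcs are not directly available. I would handle this locally at each tip by writing $g(t)=g(c)+(g(t)-g(c))$: since $g\in H_\alpha$, the increment contributes a density that is $O(|t-c|^{\alpha-1/2})$, strictly less singular than $(t-c)^{-1/2}$, so its Cauchy integral is $o(|z-c|^{-1/2})$, while the constant part reduces to the model integral $\int_L (t-c)^{-1/2}(t-z)^{-1}\,dt$, whose $(z-c)^{-1/2}$ behaviour near the Ahlfors regular tip must be established and whose leading coefficient is what makes $\sqrt R\,K$ bounded. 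Controlling this model integral, together with the logarithmic bound for $Cf$ at the tip used in the necessity argument, is the only place where the geometry of $\Gamma$ enters, and it is where I expect the genuine work to lie.
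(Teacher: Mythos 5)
Your route is essentially correct but genuinely different from the paper's. You rebuild the solution theory from scratch as a Riemann--Hilbert jump problem: necessity comes from showing $G-K$ (with $G=Cf/\sqrt{R}$, $K=C_L(g/\sqrt{R})$) has removable singularities at the tips and vanishes at infinity, so $Cf=\sqrt{R}\,K$, and then the moment conditions \eqref{2.7} drop out of a triangular system from the expansion at $\infty$; this is in spirit Muskhelishvili's original method, transplanted to Ahlfors regular arcs. The paper instead economizes by reusing its earlier results: from the proof of Corollary \ref{cor2.3} it knows $f_0$ of \eqref{2.6} satisfies the modified equation $S_Lf_0=g+P$ with $P$ as in \eqref{3.10}--\eqref{3.11}; setting $h:=f_0-f$ gives $S_Lh=P$, whose general solution Theorem \ref{thm2.2} provides, and an explicit residue computation shows $S_L(P\sqrt{R})=T$ with $T$ polynomial of degree at most $2N-1$, so $h=(T+P_{N-1})/\sqrt{R}$; boundedness of $h$ (using Lemma \ref{lem3.2} for continuity of $f_0$) forces the degree-$(2N-1)$ numerator to vanish at all $2N$ endpoints, hence $h\equiv 0$, $P\equiv 0$, and \eqref{2.7} follows from Corollary \ref{cor2.3}. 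Your approach buys self-containedness and makes the moment conditions appear transparently as decay conditions at infinity; the paper's buys brevity and avoids your endpoint removability analysis for $Cf$ with bounded density (the logarithmic bound and the circle estimates near the tips, which you correctly flag as the hard geometric work --- note that the paper itself defers the analogous endpoint asymptotics in Lemma \ref{lem3.2} to Muskhelishvili and Salaev, asserting only ``small adjustments'' are needed for Ahlfors regular arcs, so both proofs ultimately lean on the same technical core). Your continuation of $G-K$ across the open arcs should be justified, as the paper does in Lemma \ref{lem3.1}, via the Morera-type theorem of Zalcman, using that the boundary values $G_\pm=(\pm f+g)/(2\sqrt{R}_\pm)$ and $K_\pm$ are locally integrable.

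One step as written is wrong: a bounded analytic function on a disc slit along an arc need \emph{not} extend continuously to the tip $c$ --- the function $(z-c)^{i}$, with branch cut along the arc, is bounded near $c$ but has no limit there, so ``unfold the slit via $\zeta=(z-c)^{1/2}$'' cannot deliver $F_+(c)=F_-(c)$ from boundedness alone (and the unfolding itself is unavailable for a non-straight Ahlfors regular arc). The fix lies within your own plan: carrying the model-integral analysis one step further, to an asymptotic expansion $K(z)=\gamma\,(z-c)^{-1/2}+o(|z-c|^{-1/2})$ rather than a mere $O$-bound, gives $F=\sqrt{R}\,K=\gamma'+o(1)$, i.e., $F$ has a genuine limit at $c$, whence $f_0(c)=F_+(c)-F_-(c)=0$. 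This is exactly the content of Muskhelishvili's (29.8)--(29.9), quantified in the paper's Lemma \ref{lem3.2} as $S_L(g/\sqrt{R})=G/|z-c|^{\beta}$ with $\beta<1/2$ and $G$ H\"older, which yields $f_0=G\,|z-c|^{1/2-\beta}\to 0$ directly; you should replace the soft tip-continuity claim by this quantitative estimate.
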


Several applications of Corollary \ref{cor2.5} on arcs of the unit
circle may be found in \cite{Pr05b}, see Theorems 1.5, 2.1 and 2.2.
In particular, those results describe the explicit forms of
equilibrium measures with external fields defined by the exponential
and polynomial weights.

\section{Proofs}

\begin{proof}[Proof of Theorem \ref{thm1.1}]

Let $v^+$ be a harmonic conjugate of $u$ in $D_+,$ so that
$F_+:=u+iv^+$ is analytic in $D_+.$ By the Cauchy-Riemann equations,
we have $u_x(z)=v^+_y(z)$ and $u_y(z)=-v^+_x(z)$ for all $z\in D_+.$
Hence $v^+_x,v^+_y \in e^1(D_+)$ and $F_+'=u_x+iv^+_x\in E^1(D_+).$
It immediately follows that the nontangential limit values of $F_+'$
exist almost everywhere on $\Gamma$ with respect to the arclength
measure, and the same is true for $u_x,u_y,v^+_x,v^+_y.$ Since
$\Gamma$ is rectifiable, the tangent and normal vectors exist at
almost every $z\in\Gamma$. Therefore, for almost every $z\in\Gamma$,
we {\em simultaneously} have the normal vector and the nontangential
limit values for $u_x$ and $u_y$. If $n_+=(-\sin\theta,\cos\theta)$
is the inner unit normal (pointing inside $D_+$) at such a point
$z$, then the derivative in the direction $n_+$ at any point
$\zeta\in D_+$ is $\partial u/\partial n_+(\zeta) = - u_x \sin\theta
+ u_y \cos\theta.$ Thus we can define the limit boundary values of
$\partial u/\partial n_+(z)$ for a.e. $z\in\Gamma$ and
\[
\int_{\Gamma} \left|\frac{\partial u}{\partial n_+}(z(s))\right| ds
\le \int_{\Gamma} \left|u_x(z(s))\right| ds + \int_{\Gamma}
\left|u_y(z(s))\right| ds < \infty.
\]
Applying the same argument to a conjugate $v^-$ and the analytic
completion $F_-:=u+iv_-$ for $u$ in $D_-,$ we obtain that $F_-' \in
E^1(D_-)$ and $\partial u/\partial n_- \in L^1(\Gamma,ds).$ Note
that $F_-$ is multi-valued, in general, but $F_-'$ is single-valued.

The Cauchy-Riemann equations imply, after passing to the boundary
values, that
\[
\frac{\partial u}{\partial n_+}(z)=-\frac{\partial v^+}{\partial
s}(z) \quad\mbox{and}\quad \frac{\partial u}{\partial
n_-}(z)=\frac{\partial v^-}{\partial s}(z) \qquad \mbox{for a.e. }
z\in\Gamma.
\]
Therefore, we have a.e. on $\Gamma$ that
\begin{align} \label{3.1}
\frac{\partial u}{\partial n_+}(z) + \frac{\partial u}{\partial
n_-}(z) &= \frac{\partial v^-}{\partial s}(z) - \frac{\partial
v^+}{\partial s}(z) = \Im\left( \frac{\partial F_-}{\partial s}(z) -
\frac{\partial F_+}{\partial s}(z) \right) \\ \nonumber &= \Im\left(
[F_-'(z) - F_+'(z)]z'(s) \right),
\end{align}
where $z'(s)$ is the unit tangent vector to $\Gamma$ at $z(s).$

Introducing the multivalued function $\int \log(z-t)\,d\mu(t)$ with
the real part $u(z)$, we observe that $F_+$ and $F_-$ are branches
of this function and
\[
F_{\pm}'(z) = \int \frac{d\mu(t)}{z-t},\quad z\in D_{\pm}.
\]
Consider the Cauchy transform of $\mu$
\[
C\mu(z) = \frac{1}{2\pi i} \int \frac{d\mu(t)}{t-z},\quad z\in
\overline{\C}\setminus \supp\,\mu.
\]
Since $2\pi i\,C\mu(z)=-F_{\pm}'(z),\ z\in D_{\pm},$ the
nontangential limit values $C\mu_+$ from $D_+$ and $C\mu_-$ from
$D_-$ exist a.e. on $\Gamma.$ The Fundamental Lemma of Privalov on
the Cauchy singular integral for measures (cf. Privalov \cite[pp.
183-189]{Pri} and Danilyuk \cite[pp. 118-125]{Dan}) gives that the
Plemelj-Sokhotski formula
\[
[C\mu_+(z(s)) - C\mu_-(z(s))] z'(s) = \frac{d \mu}{d s}(s)
\]
holds a.e. on $\Gamma,$ where $d \mu/d s$ is the density of the
absolutely continuous part of $\mu.$ Note that the right hand side
is real. It now follows from \eqref{3.1} that
\[
\frac{1}{2\pi} \left( \frac{\partial u}{\partial n_+}(z(s)) +
\frac{\partial u}{\partial n_-}(z(s)) \right) = [C\mu_+(z(s)) -
C\mu_-(z(s))] z'(s) \quad \mbox{a.e. on }\Gamma.
\]
Let $\nu$ be the measure supported on $\Gamma,$ which is absolutely
continuous with respect to $ds$, and whose density is defined by the
left hand side of the above equation (cf. \eqref{1.4}). We shall
show that $\mu=\nu.$ Recall that $F_+'\in E^1(D_+)$ and $F_-'\in
E^1(D_-)$ with $F_-'(\infty)=0.$ Using Cauchy's integral formula, we
obtain by Theorem 10.4 of \cite[p. 170]{Dur} that
\begin{align*}
\int \frac{d\nu(t)}{t-z} &= \int_{\Gamma} \frac{(C\mu_+(t) -
C\mu_-(t)) dt}{t-z} = \frac{1}{2\pi i} \int_{\Gamma} \frac{(F_-'(t)
- F_+'(t)) dt}{t-z} \\  &= - F_{\pm}'(z) = 2\pi i\, C\mu(z),\quad
z\in D_{\pm}.
\end{align*}
Hence the Cauchy transforms of $\nu$ and $\mu$ coincide, i.e.,
\[
\int \frac{d(\nu-\mu)(t)}{t-z} = 0,\quad z\in D_{\pm}.
\]
Expanding $1/(t-z)$ in a series of negative powers of $z$ around
$z=\infty,$ we obtain in a standard way that
\[
\int t^n\,d(\nu-\mu)(t) = 0,\quad n=0,1,2,\ldots.
\]
Similarly, expanding the kernel $1/(t-z)$ near a fixed point $z_0\in
D_+,$ we have
\[
\int \frac{d(\nu-\mu)(t)}{(t-z_0)^n} = 0,\quad n\in\N.
\]
But the span of the function system $\{t^n\}_{n=0}^{\infty} \bigcup
\{(t-z_0)^{-n}\}_{n=1}^{\infty}$ is dense in $C(\Gamma),$ see
Chapter 3, \S 1 of Gaier \cite{Gai}. Hence
\[
\int f(t)\,d(\nu-\mu)(t) = 0,
\]
for any continuous function $f$ on $\Gamma$. Since $\nu-\mu$ is
orthogonal to all continuous functions on its support, it must
vanish identically.

\end{proof}

\begin{proof}[Proof of Proposition \ref{prop2.1}]

Observe that for $g\in L^p(\Gamma,ds)$ we also have that $Sg \in
L^p(\Gamma,ds)$ by \cite{Dav}.  Hence we have from \eqref{2.3} that
$Cg_+ - Cg_- = g$ and $Cg_+ + Cg_- = Sg$ a.e. on $\Gamma.$ Note that
the function $H_+(z):=Cg(z), \ z\in D_+,$ is analytic in $D_+$ and
has the boundary values $Cg_+ \in L^p(\Gamma,ds)$. It follows that
$H_+\in E^p(D_+)$ and we obtain by Theorem 10.4 of \cite{Dur} that
\begin{align} \label{3.2}
C(Cg_+)(z) = \frac{1}{2\pi i} \int_{\Gamma} \frac{Cg_+(t)\,dt}{t-z}
= \left\{
\begin{array}{ll}
H_+(z), \quad &z\in D_+, \\
0, \quad &z\in D_-.
\end{array}
\right.
\end{align}
Thus we also have for $H_-(z):=Cg(z), \ z\in D_-,$ that $H_-\in
E^p(D_-)$ and $H_-(\infty)=0.$ Hence
\begin{align} \label{3.3}
C(Cg_-)(z) = \frac{1}{2\pi i} \int_{\Gamma} \frac{Cg_-(t)\,dt}{t-z}
= \left\{
\begin{array}{ll}
-H_-(z), \quad &z\in D_-, \\
0, \quad &z\in D_+,
\end{array}
\right.
\end{align}
where the integral is taken in the positive direction with respect
to $D_+.$

Applying \eqref{2.3} to $h:=Sg,$ we have that $Ch_+ - Ch_- = h$ and
$Ch_+ + Ch_- = Sh$ a.e. on $\Gamma,$ where $Sh \in L^p(\Gamma,ds).$
Consider the Cauchy transform $Ch(z)$ for $z\in D_+$ and use
\eqref{3.2}-\eqref{3.3} to evaluate
\[
Ch(z) = \frac{1}{2\pi i} \int_{\Gamma} \frac{Sg(t)\,dt}{t-z} =
\frac{1}{2\pi i} \int_{\Gamma} \frac{(Cg_+(t)+Cg_-(t))\,dt} {t-z} =
H_+(z).
\]
Similarly, we obtain for $z\in D_-:$
\[
Ch(z) = \frac{1}{2\pi i} \int_{\Gamma} \frac{Sg(t)\,dt}{t-z} =
\frac{1}{2\pi i} \int_{\Gamma} \frac{(Cg_+(t)+Cg_-(t))\,dt} {t-z} =
-H_-(z).
\]
Hence $Sh = Ch_+ + Ch_- = Cg_+ - Cg_- = g$ a.e. on $\Gamma,$ so that
$h=Sg$ is a solution. This also implies that $S^2g=S(Sg)=g$ for any
$g \in L^p(\Gamma,ds).$ If we assume that there are two solutions
$f_1$ and $f_2$, then applying $S$ to $Sf_1=Sf_2=g$ gives
$f_1=f_2=Sg.$

\end{proof}

For the proof of Theorem \ref{thm2.2}, we need a lemma that
describes all solutions of the homogeneous equation $S_L f = 0$ on
$L$. It follows from the lemma below that the kernel of the operator
$S_L$ has dimension $N$, while the kernel of $S$ is trivial in the
case of a closed curve, by the previous proof.

\begin{lemma} \label{lem3.1}
Let $L:=\cup_{j=1}^N \gamma(a_j,b_j)$ be a union of disjoint Ahlfors
regular arcs, and let $R(z):=\prod_{j=1}^N (z-a_j)(z-b_j).$ All
solution of the equation $S_L f = 0$ in $L^1(L,ds)$ are given by the
functions $f=P_{N-1}/\sqrt{R},$ where $P_{N-1}\in\C_{N-1}[z]$.
\end{lemma}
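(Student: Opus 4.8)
The plan is to move between the density $f$ and its Cauchy transform, and to exploit the fact that the chosen branch of $\sqrt{R}$ changes sign across $L$. Extend $f$ by zero to the ambient Ahlfors regular closed curve $\Gamma$ and set $\Phi:=Cf$, which is analytic in $\overline{\C}\setminus L$ with $\Phi(\infty)=0$. Since $f$ solves $S_Lf=0$, the singular integral $Sf=S_Lf$ exists a.e.\ and equals $0$, so Privalov's Fundamental Lemma guarantees that the nontangential limits $\Phi_+,\Phi_-$ exist a.e.\ on $\Gamma$ and that the Plemelj--Sokhotski formulas \eqref{2.3} hold. On $\Gamma\setminus L$ the density vanishes, so $\Phi$ continues analytically there; on $L$ we read off $\Phi_+-\Phi_-=f$ and $\Phi_++\Phi_-=S_Lf=0$, whence
\[
\Phi_+=\tfrac12 f,\qquad \Phi_-=-\tfrac12 f\qquad\text{a.e.\ on }L.
\]

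Next I would use that crossing any arc $\gamma(a_j,b_j)$ reverses the sign of the selected branch, i.e.\ $\sqrt{R}_-=-\sqrt{R}_+$ a.e.\ on $L$. Forming $H:=\sqrt{R}\,\Phi$, which is analytic in $\C\setminus L$ away from the points $a_j,b_j$, the two sign changes cancel in the jump:
\[
H_+=\sqrt{R}_+\,\Phi_+=(-\sqrt{R}_-)\bigl(-\Phi_-\bigr)=\sqrt{R}_-\,\Phi_-=H_-\qquad\text{a.e.\ on }L.
\]
Thus $H$ has no jump across $L\setminus\{a_j,b_j\}_{j=1}^N$ and continues to a single--valued function that is analytic on $\C$ except possibly at the $2N$ nodes, where it has isolated singularities.

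The heart of the argument is to show each of these singularities is removable, and here the hypothesis $f\in L^1(L,ds)$ is decisive. The boundary values of $H$ are $H_\pm=\tfrac12\sqrt{R}_\pm\,f$, which are arclength--integrable near every node because $\sqrt{R}$ vanishes there; this rules out an essential singularity. A pole of order $k\ge1$ at a node $c$ is impossible as well, since it would force $f=2H_+/\sqrt{R}_+\sim|t-c|^{-k-1/2}$ as $t\to c$ along $L$, which is not arclength--integrable. Hence every node is a removable singularity and $H$ is entire. Because $\sqrt{R}(z)\sim z^N$ and $\Phi(z)=Cf(z)=O(1/z)$ as $z\to\infty$, we get $H(z)=O(z^{N-1})$, so the generalized Liouville theorem forces $H=P_{N-1}\in\C_{N-1}[z]$. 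Therefore $\Phi=P_{N-1}/\sqrt{R}$ and $f=\Phi_+-\Phi_-=2P_{N-1}/\sqrt{R}$ a.e.\ on $L$; absorbing the constant $2$ into the polynomial gives $f=P_{N-1}/\sqrt{R}$, as claimed.

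For the reverse inclusion I would check that each $f=P_{N-1}/\sqrt{R}$ is an admissible solution. Its endpoint singularity is of order $|z-c|^{-1/2}$, so $f\in L^1(L,ds)$; and the function $G:=P_{N-1}/\sqrt{R}$ is analytic off $L$, vanishes at $\infty$, and satisfies $G_-=-G_+$. Representing $G$ as the Cauchy integral of its own jump (Theorem 10.4 of \cite{Dur}) identifies $G=Cf$ with $f=G_+-G_-$, and then \eqref{2.3} yields $S_Lf=G_++G_-=0$. I expect the genuine obstacle to be the endpoint analysis on merely Ahlfors regular arcs: making the growth/pole dichotomy for $H$ and the continuation across $L$ fully rigorous rests on the Cauchy--integral estimates near the nodes $a_j,b_j$ that underlie David's theorem and Privalov's lemma, whereas the sign--change property of $\sqrt{R}$ and the Liouville step are then routine.
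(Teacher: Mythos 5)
Your overall strategy is the paper's own: extend by zero, pass to the Cauchy transform, use $(\sqrt{R})_-=-(\sqrt{R})_+$ to kill the jump of $H=\sqrt{R}\,C_Lf$, conclude $H=P_{N-1}$ by Liouville from the $O(z^{N-1})$ growth, and verify the converse by a contour-shrinking/jump computation. Where you diverge is precisely at the endpoints, and that is where your argument has a genuine gap. You continue $H$ across the open arcs and then try to classify the resulting isolated singularities at the $2N$ nodes, claiming that arclength-integrability of the boundary values $H_\pm=\frac12(\sqrt{R})_\pm f$ near a node rules out an essential singularity. That inference is false: a function analytic in a punctured disk can have an essential singularity while its restriction to an arc through the puncture is bounded, let alone integrable --- $e^{1/z}$ has modulus $1$ on the imaginary axis. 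Your pole exclusion (a pole of order $k$ forces $f\sim|t-c|^{-k-1/2}\notin L^1$) is sound \emph{once} you know the singularity is at worst a pole, but nothing in your argument delivers that dichotomy; you would need growth control of $H$ on full circles around the node, and the elementary bound $|C_Lf(z)|\le\|f\|_{L^1}/(2\pi\,\mathrm{dist}(z,L))$ degenerates where such circles meet $L$.

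The paper sidesteps this entirely: from $(\sqrt{R}\,C_Lf)_+=(\sqrt{R}\,C_Lf)_-$ a.e.\ with boundary values in $L^1(L,ds)$, it shows ``in the usual way'' that the integral of $\sqrt{R}\,C_Lf$ over \emph{every} closed contour (even one crossing $L$ or passing near the nodes) vanishes, and then invokes the Morera-type theorem of Zalcman \cite[Th.~1]{Zal} to continue $H$ analytically to all of $\C$ at once --- endpoints included --- so no singularity classification is ever needed. Note also that your first-stage step, continuation across the open arcs from a.e.-matching nontangential $L^1$ boundary values, is itself not automatic for merely rectifiable or Ahlfors regular arcs without a Smirnov-class-type hypothesis; it is exactly what the Morera/Zalcman machinery supplies. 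You honestly flagged this as ``the genuine obstacle,'' but in the proposal it is asserted rather than proved, and the node analysis offered in its place does not close. The remaining components --- the jump algebra from \eqref{2.3}, the Liouville step, the factor-of-$2$ absorption, and the converse direction via the contour $\Lambda$ around the arcs --- agree with the paper's Lemma \ref{lem3.1} and are correct.
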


\begin{proof}

For any $P_{N-1}\in\C_{N-1}[z]$, we first show that
\begin{equation} \label{3.4}
\frac{1}{\pi i} \int_L \frac{P_{N-1}(t)\,dt}{(t-z) \sqrt{R(t)}} =
\left\{
\begin{array}{ll}
0 \quad &\mbox{for a.e. }z\in L, \\
P_{N-1}(z)/\sqrt{R(z)}, \ &z \in \C\setminus L,
\end{array}
\right.
\end{equation}
where the integral is understood in the Cauchy principal value sense
for $z\in L.$ Let $h(z) = P_{N-1}(z)/\sqrt{R(z)}, \ z\in \C\setminus
L$. It is clear that the limit values of $\sqrt{R(z)}$ as $z$ tends
to $\zeta\in L$ from $D_+$ and from $D_-$ are negatives of each
other. Hence the boundary limit values of $h$ on $L$ satisfy
\begin{equation} \label{3.5}
h_+(\zeta) = h(\zeta) = -h_-(\zeta) \quad \mbox{for a.e. }\zeta\in
L.
\end{equation}
Consider a contour $\Lambda$ which consists of $N$ simple closed
curves, one around each of the arcs $\gamma(a_j, b_j)$. Then
$$\frac{1}{2\pi i} \int_{\Lambda} \frac{h(t)}{t-z}\,dt  = h(z)$$
for $z$ in the exterior of $\Lambda$, and the integral equals zero
for $z\in L$. If we take $z\in \C\setminus L$ and shrink $\Lambda$
to $L$, then
$$h(z) = \frac{1}{\pi i}\int_L \frac{h(t)}{t-z}\,dt,
\quad z\in \C\setminus L,$$ by \eqref{3.5}. Using Privalov's
Fundamental Lemma, we obtain that
$$ S_L h(z) = \frac{1}{\pi i}\int_L \frac{h(t)}{t-z}\,dt =
\frac{h_+(z)}{2} + \frac{h_-(z)}{2} \quad \mbox{for a.e. }z\in L.$$
But the right hand side is zero for a.e. $z\in L$ by \eqref{3.5},
and \eqref{3.4} is proved. Thus we showed that every function
$h=P_{N-1}/\sqrt{R}$ is a solution of the equation $S_L f = 0.$

Suppose now that $f$ satisfies $S_L f(z) = 0$ for a.e. $z\in L.$
Then \eqref{2.3} gives that
\begin{equation} \label{3.6}
C_L f_+ + C_L f_- = 0 \quad \mbox{and} \quad C_L f_+ - C_L f_- = f
\quad \mbox{a.e. on }L,
\end{equation}
where
\[
C_L f(z) := \frac{1}{2\pi i}\int_L \frac{f(t)}{t-z}\,dt, \quad z \in
\C\setminus L.
\]
It follows from the first equation of \eqref{3.6} that
$(\sqrt{R}\,C_L f)_+ - (\sqrt{R}\,C_L f)_- = 0$ and $(\sqrt{R}\,C_L
f)_+ = (\sqrt{R}\,C_L f)_-$ a.e. on $L.$ Clearly, these boundary
values belong to $L^1(L,ds)$ by \eqref{3.6}. This allows us to show
in the usual way that the integral of $\sqrt{R}\,C_L f$ over any
closed contour (even intersecting L) is zero. Hence the analytic in
$\C\setminus L$ function $\sqrt{R}\,C_L f$ can be continued
analytically to the whole $\C$ by a Morera-type theorem of Zalcman
\cite[Th. 1]{Zal}. Observe that $\sqrt{R(z)}\,C_L f(z) =
O\left(z^{N-1}\right)$ as $z\to\infty,$ which implies that this
function is a polynomial $P_{N-1}\in\C_{N-1}[z].$ Finally, we apply
the second equations of \eqref{3.6} and of \eqref{3.4} to find that
\[
f(z) = C_L f_+(z) - C_L f_-(z) = \frac{1}{2}\frac{P_{N-1}(z)}
{\sqrt{R(z)}} + \frac{1}{2}\frac{P_{N-1}(z)}{\sqrt{R(z)}} =
\frac{P_{N-1}(z)}{\sqrt{R(z)}}
\]
for a.e. $z\in L.$

\end{proof}

\begin{proof}[Proof of Theorem \ref{thm2.2}]

Consider the function
\[
f_0(z) = \frac{1}{\pi i \sqrt{R(z)}} \int_{L}
\frac{g(t)\sqrt{R(t)}\,dt}{t-z} = \frac{S_L
(g\sqrt{R})(z)}{\sqrt{R}(z)},\quad z\in L,
\]
which is obtained from \eqref{2.5} by setting $P_{N-1}\equiv 0.$ We
proceed by first proving that $f_0$ is a solution of $S_L f = g.$
Since $g\sqrt{R}\in L^p(L,ds),\ 2<p<\infty,$ we also have that $S_L
(g\sqrt{R})\in L^p(L,ds),$ see \cite{Dav}. It is not difficult to
see that $1/\sqrt{R}\in L^q(L,ds)$ for all $q<2$ (cf. \cite{Sal2},
for example). Using H\"older's inequality, we immediately conclude
that $f_0\in L^r(L,ds)$ for some $r>1.$ Hence $S_L f_0\in
L^r(L,ds)$. Consider the analytic function
\[
F(z):=\frac{1}{2\pi i} \int_{L} \frac{g(t)\sqrt{R(t)}\,dt}{t-z} =
C_L (g\sqrt{R})(z),\quad z\in \C\setminus L.
\]
The Plemelj-Sokhotski formulas \eqref{2.3} read in this case
\[
F_+ - F_- = g\sqrt{R} \quad \mbox{and} \quad F_+ + F_- = f_0\sqrt{R}
\quad \mbox{a.e. on }L.
\]
If we define $\Phi(z):=F(z)/\sqrt{R(z)}, \ z\in \C\setminus L,$ and
use $(\sqrt{R})_+(z)=\sqrt{R(z)}=-(\sqrt{R})_-(z),\ z\in L,$ then we
obtain
\begin{align} \label{3.7}
\Phi_+ + \Phi_- = g \quad \mbox{and} \quad \Phi_+ - \Phi_- = f_0
\quad \mbox{a.e. on }L.
\end{align}
But we also have $(C_Lf_0)_+ - (C_Lf_0)_- = f_0$ a.e. on $L$ for the
Cauchy transform of $f_0,$ see \eqref{2.3}. It follows that the
function $H:=\Phi-C_Lf_0$ is analytic in $\overline\C\setminus L,$
with $H(\infty)=0,$ and satisfies $H_+ - H_- = 0$ a.e. on $L.$ We
can now argue in the same way as in Lemma \ref{lem3.1}, and use a
Morera-type theorem \cite{Zal} to deduce that $H$ can be continued
to an entire function. Thus this function is identically $0$ in
$\overline\C$ by Liouville's theorem. Since $\Phi = C_Lf_0$ in
$\C\setminus L,$ we have by \eqref{2.3} and \eqref{3.7} that
\[
S_Lf_0 = (C_Lf_0)_+ + (C_Lf_0)_- = \Phi_+ + \Phi_- = g \quad
\mbox{a.e. on }L.
\]

If $f$ is any solution of $S_Lf=g,$ then $h=f-f_0$ is a solution of
the homogeneous equation $S_L h = 0.$ Hence it has the form
$h=P_{N-1}/\sqrt{R}$ by Lemma \ref{lem3.1}.

\end{proof}

\begin{proof}[Proof of Corollary \ref{cor2.3}]

We shall compute the Cauchy transform of $f_0$ to show that $f_0$
always satisfies a certain modified integral equation, which is
found below. Consider
\[
C_Lf_0(z) = \frac{1}{2\pi i} \int_L \frac{f_0(t)\,dt}{t-z}, \quad
z\in\C\setminus L,
\]
and define
\[
\Psi(z) := \frac{\sqrt{R(z)}}{2\pi i} \int_{L}
\frac{g(t)\,dt}{\sqrt{R(t)}(t-z)} = \sqrt{R(z)}\,
C_L(g/\sqrt{R})(z), \quad z\in\C\setminus L.
\]
Since $(\sqrt{R})_+ = \sqrt{R} = - (\sqrt{R})_-$ on $L$ and $S_L
(g/\sqrt{R}) = C_L(g/\sqrt{R})_+ + C_L(g/\sqrt{R})_-$ a.e. on $L$ by
\eqref{2.3}, we have that
\begin{align} \label{3.8}
f_0 &= \sqrt{R}\, S_L (g/\sqrt{R}) = \sqrt{R}\,C_L(g/\sqrt{R})_+ +
\sqrt{R}\,C_L(g/\sqrt{R})_- \\ \nonumber &= \Psi_+ - \Psi_-
\end{align}
holds a.e. on $L.$ Passing to the contour integral over both sides
of the cut $L$ in the plane, we obtain
\[
C_Lf_0(z) = \frac{1}{2\pi i} \int_L \frac{\left(\Psi_+(t) -
\Psi_-(t)\right)\,dt}{t-z} = \frac{1}{2\pi i} \oint_L
\frac{\Psi(t)\,dt}{t-z}, \quad z\in\C\setminus L.
\]
Let $\Lambda$ be a contour consisting of $N$ simple closed curves,
one around each of the arcs of $L$, such that $z$ is outside
$\Lambda.$ Cauchy's integral theorem and the definition of $\Psi$
give that
\begin{align*}
C_Lf_0(z) &= \frac{1}{2\pi i} \int_{\Lambda} \frac{\Psi(t)\,dt}{t-z}
= \frac{1}{2\pi i} \int_{\Lambda} \frac{\sqrt{R(t)}}{t-z} \left(
\frac{1}{2\pi i} \int_{L} \frac{g(w)\,dw}{\sqrt{R(w)}(w-t)}\right)
dt \\ &= \frac{1}{2\pi i} \int_{L} \frac{g(w)}{\sqrt{R(w)}} \left(
\frac{1}{2\pi i} \int_{\Lambda} \frac{\sqrt{R(t)}\,dt}{(t-z)(w-t)}
\right) dw.
\end{align*}
Next, we use residues at $z$ and at $\infty$ to evaluate the inner
integral. For the residue at $z$, we immediately obtain
$\sqrt{R(z)}/(w-z).$ Writing $\sqrt{R(t)} = Q(t) +O(1/t)$ near
infinity, where $Q(t)$ is a polynomial of degree $N$, it follows
that
\[
\frac{\sqrt{R(t)}}{t-z} = Q(z,t) + O\left(\frac{1}{t}\right)
\quad\mbox{as }t\to\infty,
\]
with
\begin{align} \label{3.9}
Q(z,t) := \frac{Q(z)-Q(t)}{z-t}.
\end{align}
Note that $Q(z,t)$ is a polynomial in both variables $z$ and $t$, of
degree $N-1.$ Hence the residue at infinity for the inner integral
is equal to $Q(z,w)$, and we obtain that
\[
\frac{1}{2\pi i} \int_{\Lambda} \frac{\sqrt{R(t)}\,dt}{(t-z)(w-t)} =
\frac{\sqrt{R(z)}}{w-z} + Q(z,w).
\]
Finally,
\[
C_Lf_0(z) = \frac{\sqrt{R(z)}}{2\pi i} \int_{L}
\frac{g(w)\,dw}{\sqrt{R(w)}\,(w-z)} + \frac{1}{2} P(z), \quad z\in
\C\setminus L,
\]
where
\begin{align} \label{3.10}
P(z) := \frac{1}{\pi i} \int_{L} \frac{g(w)Q(z,w)\,dw} {\sqrt{R(w)}}
\end{align}
is a polynomial of degree at most $N-1.$ Recall that $(\sqrt{R})_+ =
\sqrt{R} = - (\sqrt{R})_-$ on $L$ and $g/\sqrt{R} =
C_L(g/\sqrt{R})_+ - C_L(g/\sqrt{R})_-$ a.e. on $L$ by \eqref{2.3}.
Thus
\[
\left(C_Lf_0\right)_+ + \left(C_Lf_0\right)_- = \sqrt{R} \left(
C_L(g/\sqrt{R})_+ - C_L(g/\sqrt{R})_- \right) + P = g + P
\]
a.e. on $L.$ On the other hand, \eqref{2.3} directly gives that
\[
\left(C_Lf_0\right)_+ + \left(C_Lf_0\right)_- = S_Lf_0
\]
a.e. on $L.$ We established in this manner that $f_0$ satisfies the
modified integral equation
\begin{align} \label{3.11}
S_Lf_0 = g + P,
\end{align}
where $P$ is defined by \eqref{3.10}. Obviously, $f_0$ is a solution
of the original equation $S_Lf=g$ if and only if $P\equiv 0.$ Thus
it remains to show that the vanishing of $P$ is equivalent to
\eqref{2.7}. Observe from \eqref{3.9} that
\[
Q(z,t) = q_0(t) z^{N-1} + q_1(t) z^{N-2} + \ldots + q_{N-1}(t),
\]
where each $q_k$ is a monic polynomial of degree $k.$ Then $P\equiv
0$ is equivalent to the system
\[
\frac{1}{2\pi i} \int_{L} \frac{g(w)q_k(w)\,dw} {\sqrt{R(w)}} = 0,
\quad k=0,\ldots,N-1,
\]
by \eqref{3.10}. Noting that the polynomials $q_k$ are linearly
independent, we conclude that the above system in equivalent to
\eqref{2.7}.

\end{proof}

\begin{proof}[Proof of Corollary \ref{cor2.4}]

It is clear that \eqref{2.5} may be written in the form
\[
f = \frac{S_{L}(g\sqrt{R})}{\sqrt{R}} + \frac{P_{N-1}}{\sqrt{R}},
\]
where $P_{N-1}\in\C_{N-1}[z].$ Let $E\subset L\setminus
\{a_j,b_j\}_{j=1}^N$ be compact. In order to prove that $f\in
H_{\alpha}\left(E\right),$ it is sufficient to show that $S_{L}
(g\sqrt{R})\in H_{\alpha}\left(E\right).$ The function
$G:=g\sqrt{R}\in H_{\alpha}\left(L\right)$ can be continued to a
function $\tilde G\in H_{\alpha}\left(\Gamma\right),$ where $\Gamma$
is a closed Ahlfors regular curve, so that $S\tilde G \in
H_{\alpha}\left(\Gamma\right)$ by \cite{Sal,Gus}. But the Cauchy
singular integral $S$ of $\tilde G \vert_{\Gamma\setminus L}$ is
analytic at every $z\in E$. Thus it follows that $S_{L} G \in
H_{\alpha}\left(E\right).$

\end{proof}

\begin{lemma} \label{lem3.2}
Let $L=\cup_{j=1}^N \gamma(a_j,b_j)$ be a union of disjoint Ahlfors
regular arcs. If $g\in H_{\alpha}(L),\ \alpha>0,$ then the function
$f_0$ defined in \eqref{2.6} is continuous on $L$, and
$f_0(a_j)=f_0(b_j)=0,\ j=1,\ldots,N.$
\end{lemma}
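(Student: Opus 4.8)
The plan is to turn \eqref{2.6} into an absolutely convergent integral and then analyze the interior of $L$ and a neighbourhood of each endpoint separately. Writing $f_0=\sqrt{R}\,S_L(g/\sqrt{R})$, I first record that $S_L(1/\sqrt{R})=0$ a.e.\ on $L$, which is the case $P_{N-1}\equiv 1$ of Lemma \ref{lem3.1}. Hence for a.e.\ $z\in L$ the principal value of $g(z)\int_L dt/(\sqrt{R(t)}(t-z))$ is zero, and subtracting it yields
\[
f_0(z)=\frac{\sqrt{R(z)}}{\pi i}\int_L\frac{g(t)-g(z)}{\sqrt{R(t)}\,(t-z)}\,dt .
\]
Because $|g(t)-g(z)|\le A\,|t-z|^{\alpha}$, the singularity at $t=z$ is now integrable, so the right-hand side is absolutely convergent and defines $f_0$ at every point of $L\setminus\{a_j,b_j\}_{j=1}^N$. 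This representation drives both assertions.

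For continuity at interior points I would not estimate this integral directly, but invoke what is already proved. By \eqref{3.11}, $S_Lf_0=g+P$ with $P$ a polynomial, and since $g\in H_{\alpha}(L)$ we have $g+P\in H_{\alpha}(L)$. Thus $f_0\in L^1(L,ds)$ solves a singular equation with H\"older right-hand side, so by Theorem \ref{thm2.2} it equals \eqref{2.5} for the datum $g+P$ and some $P_{N-1}$. Corollary \ref{cor2.4} then gives $f_0\in H_{\alpha}(E)$ on every compact $E\subset L\setminus\{a_j,b_j\}_{j=1}^N$, hence $f_0$ is continuous at each interior point of $L$.

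The real work, and the step I expect to be the main obstacle, is the behaviour near an endpoint, say $a_1$; put $\rho=|z-a_1|$ for $z\in L$ close to $a_1$. The representation and $|g(t)-g(z)|\le A|t-z|^{\alpha}$ give
\[
|f_0(z)|\le\frac{A}{\pi}\,|\sqrt{R(z)}|\int_L\frac{|t-z|^{\alpha-1}}{|\sqrt{R(t)}|}\,|dt| .
\]
Here $|\sqrt{R(z)}|\asymp\rho^{1/2}$, while $|\sqrt{R(t)}|^{-1}\asymp|t-a_1|^{-1/2}$ near $a_1$; the contributions of the other endpoints are bounded, since there $|t-z|^{\alpha-1}$ is bounded and $|\sqrt{R(t)}|^{-1}$ is integrable. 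Everything therefore reduces to controlling
\[
J(z)=\int_L\frac{|dt|}{|t-z|^{1-\alpha}\,|t-a_1|^{1/2}} .
\]
Using only the upper Ahlfors bound $|L\cap D_r|\le Ar$, I would split $L$ into the dyadic shells $\{2^k\rho\le|t-z|+|t-a_1|<2^{k+1}\rho\}$, on which $|t-z|$ and $|t-a_1|$ are both comparable to $2^k\rho$ for $k\ge 1$, and treat the innermost shell by the three subregions where $t$ is close to $z$, close to $a_1$, or comparable to both. Summing the length bound $A\,2^k\rho$ against the size of the integrand on each piece gives
\[
J(z)\le C\begin{cases}\rho^{\alpha-1/2}, & 0<\alpha<1/2,\\ \log(1/\rho), & \alpha=1/2,\\ 1, & \alpha>1/2.\end{cases}
\]
In each case $|f_0(z)|\le C'\rho^{1/2}J(z)\to 0$ as $z\to a_1$, and the same estimate holds at every $a_j$ and $b_j$. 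The delicate point is exactly this length estimate: since Ahlfors regularity provides no lower bound on $|L\cap D_r|$, arclength and chord length need not be comparable, so $J(z)$ must be bounded by summing the regularity bounds over the shells rather than by a change of variables on an arclength parametrization.

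Combining the two parts, $f_0$ is continuous on $L\setminus\{a_j,b_j\}$ and tends to $0$ at each endpoint; setting $f_0(a_j)=f_0(b_j)=0$ makes $f_0\in C(L)$ with the asserted boundary values, which is the lemma.
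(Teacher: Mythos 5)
Your proof is correct, but it takes a genuinely different route from the paper at the decisive step. For interior continuity you both end up invoking the extension-to-a-closed-curve argument (the paper applies it directly to $g/\sqrt{R}$, you apply it via $S_Lf_0=g+P$ from \eqref{3.11} together with Theorem \ref{thm2.2} and Corollary \ref{cor2.4} --- a small detour, but not circular, since none of those results depend on Lemma \ref{lem3.2}). The real divergence is at the endpoints: the paper does not prove this part at all, but cites the technical analysis of Muskhelishvili (equations (29.8)--(29.9)) and Salaev \cite{Sal2}, asserting that it carries over to Ahlfors regular arcs ``with relatively small adjustments,'' to get $S_L(g/\sqrt{R})(z)=G(z)/|z-c|^{\beta}$ with $\beta<1/2$ and $G$ H\"older on $L$, whence $f_0(z)=G(z)\,|z-c|^{1/2-\beta}$. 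You instead make the endpoint analysis self-contained: the regularization $f_0(z)=\frac{\sqrt{R(z)}}{\pi i}\int_L\frac{g(t)-g(z)}{\sqrt{R(t)}(t-z)}\,dt$ is legitimate, since $S_L(1/\sqrt{R})=0$ a.e.\ is exactly the case $P_{N-1}\equiv 1$ of \eqref{3.4} in Lemma \ref{lem3.1}, and your dyadic-shell estimate of $J(z)$ is sound: on the $k$-th shell with $k\ge 1$, the constraint $\bigl||t-z|-|t-a_1|\bigr|\le\rho$ forces both distances to be comparable to $2^k\rho$, each shell sits in a disk of radius $O(2^k\rho)$ so the upper Ahlfors bound caps its length, and the resulting bounds $\rho^{\alpha-1/2}$, $\log(1/\rho)$, $1$ in the three regimes all give $\rho^{1/2}J(z)\to 0$. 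You correctly identify the one point where rectifiability alone would not suffice --- chord and arclength need not be comparable, so the shell-summation replaces any arclength change of variables; this is precisely where Ahlfors regularity enters. What each approach buys: the paper's citation yields the stronger conclusion that $f_0$ is H\"older continuous on all of $L$ (not just continuous with vanishing endpoint values), while your argument proves exactly the stated lemma from first principles, with explicit rates $|f_0(z)|=O(\rho^{\alpha})$, $O(\rho^{1/2}\log(1/\rho))$, $O(\rho^{1/2})$, and avoids relying on an adaptation of the smooth-arc analysis that the paper gestures at but does not reproduce. One cosmetic point: your representation and endpoint bound hold a priori only a.e.\ (where the principal values in \eqref{2.6} and \eqref{3.4} exist), so the conclusion is that $f_0$ agrees a.e.\ with a function continuous on $L$ vanishing at the endpoints --- which is also how the paper's statement must be read.
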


\begin{proof}

We observe that $f_0 = \sqrt{R}\,S_{L}(g/\sqrt{R}),$ and that
$g/\sqrt{R}$ is H\"older continuous on any compact set $E\subset L$
that does not include the endpoints of $L.$ Using a similar argument
with continuation to a H\"older continuous function on the closed
curve $\Gamma$, such as in the above proof of Corollary
\ref{cor2.4}, we conclude that $S_{L}(g/\sqrt{R})$ is also H\"older
continuous on $E$. Hence we now need to analyze the behavior of
$S_{L}(g/\sqrt{R})$ near the endpoints of $L.$ This analysis was
already carried out in Chapter 4 of \cite{Mus} for smooth (or
piecewise smooth) $L$. Since the argument is rather technical, and
requires relatively small adjustments for the case of Ahlfors
regular $L$, we do not reproduce it here. In particular, it is shown
in \cite{Mus} (see equations (29.8) and (29.9) on page 75) and in
\cite{Sal2} that
\[
S_{L}(g/\sqrt{R})(z) = G(z)/|z-c|^{\beta},\quad \beta<1/2,
\]
for $z\in L$ near any of the endpoints $c$ of $L$, where $G$ is
H\"older continuous on $L.$ It is immediate that
\[
f_0(z) = G(z)\,|z-c|^{1/2-\beta}
\]
for $z\in L$ near $c$, so that $f_0$ is H\"older continuous on $L$
and $f(a_j)=f(b_j)=0,\ j=1,\ldots,N.$

\end{proof}

\begin{proof}[Proof of Corollary \ref{cor2.5}]

Suppose that there exists a bounded function $f$ that satisfies
$S_Lf=g$ on $L.$ We know from the proof of Corollary \ref{cor2.3}
that $f_0$ defined in \eqref{2.6} satisfies the equation $S_Lf_0 = g
+ P$ on $L$, where $P$ is a polynomial of degree at most $N-1$, see
\eqref{3.10}-\eqref{3.11}. Defining $h:=f_0-f,$ we readily have that
$S_L h = P.$ It will be shown below that $h\equiv 0$, so that
$P\equiv 0$ and $S_L f_0 = g$. But then  \eqref{2.7} holds by
Corollary \ref{cor2.3}.

Consider the equation $S_L h = P.$ All solutions of this equation
are described by \eqref{2.5}:
\begin{align} \label{3.12}
h(z) = \frac{1}{\pi i \sqrt{R(z)}} \int_{L}
\frac{P(t)\sqrt{R(t)}\,dt}{t-z} + \frac{P_{N-1}(z)}{\sqrt{R(z)}}
\quad \mbox{a.e. on }L,
\end{align}
where $P_{N-1}\in\C_{N-1}[z]$ is arbitrary. We evaluate the integral
$S_L (P\sqrt{R})$ in the above formula by following an idea used in
the proof of Corollary \ref{cor2.3}. Recall that $(\sqrt{R})_+ =
\sqrt{R} = - (\sqrt{R})_-$ on $L$ and $S_L (P\sqrt{R}) =
C_L(P\sqrt{R})_+ + C_L(P\sqrt{R})_-$ a.e. on $L$ by \eqref{2.3}. We
find the Cauchy transform $C_L(P\sqrt{R})$ by passing to the contour
integral over both sides of the cut $L$ in the plane. This yields
\[
C_L(P\sqrt{R})(z) = \frac{1}{2\pi i} \int_L
\frac{P(t)\sqrt{R(t)}\,dt}{t-z} = \frac{1}{4\pi i} \oint_L
\frac{P(t)\sqrt{R(t)}\,dt}{t-z}, \quad z\in\C\setminus L,
\]
where in the second integral we have the {\em boundary limit values}
of $P(t)\sqrt{R(t)}$ on $L$ (from $\C\setminus L).$ Let $\Lambda$ be
again a contour consisting of $N$ simple closed curves, one around
each of the arcs of $L$, such that $z$ is outside $\Lambda.$ Using
Cauchy's integral theorem, we obtain that
\begin{align*}
C_L(P\sqrt{R})(z) = \frac{1}{4\pi i} \int_{\Lambda}
\frac{P(t)\sqrt{R(t)}\,dt}{t-z}, \quad z\in\C\setminus L.
\end{align*}
The latter integral is found by evaluating the residues of the
integrand at $z$ and at $\infty$. The residue at $z$ is clearly
equal to $P(z)\sqrt{R(z)}/2.$ Writing $P(t)\sqrt{R(t)} = T(t)
+O(1/t)$ near infinity, where $T(t)$ is a polynomial of degree at
most $2N-1$, we find that the residue at $\infty$ is equal to
$T(z)/2.$ Hence
\[
C_L(P\sqrt{R})(z) = \frac{P(z)\sqrt{R(z)}}{2} + \frac{T(z)}{2},
\quad z\in\C\setminus L,
\]
and \eqref{2.3} gives
\[
S_L(P\sqrt{R})(z) = T(z), \quad z\in L,
\]
because $(P\sqrt{R})_+ = - (P\sqrt{R})_-$ on $L$. Returning to
\eqref{3.12}, we have
\[
h(z) = \frac{T(z)+P_{N-1}(z)}{\sqrt{R(z)}} \quad \mbox{a.e. on }L.
\]
Note that the numerator is a polynomial of degree $2N-1,$ which has
to vanish at $2N$ endpoints of $L$ in order for $h$ to be bounded on
$L$. Therefore, this polynomial is identically zero, with immediate
implications that $h\equiv 0$, $f_0\equiv f$, $P\equiv 0$ and $S_L
f_0 = g$. Thus \eqref{2.7} holds by Corollary \ref{cor2.3}, and we
showed that $f_0$ is the unique bounded solution of $S_Lf=g.$

Conversely, assume that \eqref{2.7} is satisfied. Corollary
\ref{cor2.3} shows that $f_0$ of \eqref{2.8} (or of \eqref{2.6}) is
a solution of $S_Lf=g.$ Applying Lemma \ref{lem3.2}, we obtain that
$f_0$ is continuous on $L$, and it vanishes at the endpoints of $L.$
Hence it is bounded on $L$, and the uniqueness of such solution
follows from the first part of this proof.

\end{proof}

\bibliographystyle{amsplain}

\begin{thebibliography}{99}

\bibitem{Boc}
S. Bochner, Green-Goursat theorem, \textsl{Math. Zeit.} \textbf{63}
(1955), 230--242.

\bibitem{Ber}
G. Bertrand, Equations de Fredholm \`a int\'egrales principales au
sens de Cauchy, \textsl{Comptes Rendus, Paris} \textbf{172} (1921),
1458--1461.

\bibitem{BM}
Yu. D. Burago and V. G. Maz'ya, \textsl{Potential Theory and
Function Theory for Irregular Regions}, (Seminars in Mathematics, V.
A. Steklov Mathematical Institute, Leningrad, Vol. 3), Consultants
Bureau, New York, 1969.

\bibitem{BMS}
Yu. D. Burago, V. G. Maz'ya, and  V. D. Sapozhnikova, On the theory
of simple and double-layer potentials for domains with irregular
boundaries, in ``Boundary Value Problems and Integral Equations," V.
I. Smirnov (ed.), Consultants Bureau, New York, 1968, pp. 1-30.

\bibitem{Coh}
P. J. Cohen, On Green's theorem, \textsl{Proc. Amer. Math. Soc.}
\textbf{10} (1959), 109--112.

\bibitem{Dan}
I. I. Danilyuk, \textsl{Nonregular Boundary Value Problems in the
Plane}, Nauka, Moscow, 1975. (Russian)

\bibitem{Dav}
G. David, Op\'erateurs int\'egraux singuliers sur certaines courbes
du plan complexe, \textsl{Ann. Sci. \'Ecole Norm. Sup.} \textbf{17}
(1984), 157--189.

\bibitem{DKM}
P. Deift, T. Kreicherbauer, and K. T.-R. McLaughlin, New results on
the equilibrium measure for logarithmic potentials in the presence
of an external field, \textsl{J. Approx. Theory} \textbf{95} (1998),
388--475.

\bibitem{Dur}
P. L. Duren, \textsl{Theory of $H^p$ Spaces}, Dover, New York, 2000.

\bibitem{Gai}
D. Gaier, \textsl{Lectures on Complex Approximation},
Birkh\"{a}user, Boston, 1987.

\bibitem{Gol}
G. M. Goluzin, \textsl{Geometric Theory of Functions of a Complex
Variable}, Amer. Math. Soc., Providence, R.I., 1969.

\bibitem{Gus}
E. G. Guseinov, The Plemelj-Privalov theorem for the generalized
H\"older classes, \textsl{Sb. Math.} \textbf{75} (1993), 165--182.

\bibitem{HK}
W. K. Hayman and P. B. Kennedy, \textsl{Subharmonic Functions},
Academic Press, London, 1976.

\bibitem{Hve}
B. V. Hvedelidze, Linear discontinuous boundary problems in the
theory of functions, singular integral equations and some of their
applications,  \textsl{Trudy Tbiliss. Mat. Inst. Razmadze}
\textbf{23} (1956), 3--158. (Russian)

\bibitem{Kel}
O. D. Kellogg, \textsl{Foundations of Potential Theory}, Dover, New
York, 1954.

\bibitem{MS} A. Mart\'inez-Finkelshtein and E. B. Saff, Asymptotic
properties of Heine-Stieltjes and Van Vleck polynomials, \textsl{J.
Approx. Theory} \textbf{118} (2002), 131--151.

\bibitem{Mus}
N. I. Muskhelishvili, \textsl{Singular Integral Equations}, Dover,
New York, 1992.

\bibitem{Ple1}
J. Plemelj, \textsl{Potentialtheoretische Untersuchungen},
Preisschr. Jablonowskische Ges. 40, Nr. 16, Leipzig, 1911.

\bibitem{Ple2}
J. Plemelj, Ein Ergänzungssatz zur {\it Cauchy}schen
Integraldarstellung analytischer Funktionen, Randwerte betreffend,
\textsl{Monatsch. f\"ur Math. u. Phys.} \textbf{19} (1908),
205--210.

\bibitem{Pom}
Ch. Pommerenke, \textsl{Boundary  Behaviour of Conformal Maps},
Springer-Verlag, Berlin, 1992.

\bibitem{Pr05a}
I. E. Pritsker, The Gelfond-Schnirelman method in prime number
theory, \textsl{Canad. J. Math.} \textbf{57} (2005), 1080--1101.

\bibitem{Pr05b}
I. E. Pritsker, Weighted energy problem on the unit circle,
\textsl{Constr. Approx.} \textbf{23} (2006), 103--120.

\bibitem{Pri}
I. I. Privalov, \textsl{Boundary Properties of Analytic Functions},
2nd ed., Gosudarstv. Izdat. Tehn.-Teor. Lit., Moscow-Leningrad,
1950. (Russian)

\bibitem{Rad}
J. Radon, Uber die Randwertaufgaben beim logarithmischen Potential,
\textsl{Sitzungsber. Akad. Wiss. Wien} \textbf{128} (1920),
1123--1167.

\bibitem{Ran}
T. Ransford, \textsl{Potential Theory in the Complex Plane},
Cambridge University Press, Cambridge, 1995.

\bibitem{Sal}
V. V. Salaev, Direct and inverse estimates for the singular Cauchy
integral along a closed curve, \textsl{Math. Notes} \textbf{19}
(1976), 221--231.

\bibitem{Sal2}
V. V. Salaev, The singular Cauchy operator over an open curve. N. I.
Muskhelishvili's theorem, \textsl{Azerbaidzhan. Gos. Univ. Uchen.
Zap. Ser. Fiz.-Mat. Nauk} (1976), no. 1, 45--52. (Russian)

\bibitem{ST}
E. B. Saff and V. Totik, \textsl{Logarithmic Potentials with
External Fields}, Springer-Verlag, Berlin, 1997.

\bibitem{Sha}
V. L. Shapiro, On Green's theorem, \textsl{J. London Math. Soc.}
\textbf{32} (1957), 261--269.

\bibitem{Zal}
L. Zalcman, Analyticity and the Pompeiu problem, \textsl{Arch.
Rational Mech. Anal.} \textbf{47} (1972), 237--254.

\end{thebibliography}

\end{document}